\documentclass[final,onefignum,onetabnum]{siamonline190516}

\usepackage{amsfonts,amsopn}
\usepackage{multirow,adjustbox}
\usepackage{subcaption}
\usepackage{algorithm,algorithmic}
\ifpdf
  \DeclareGraphicsExtensions{.eps,.pdf,.png,.jpg}
\else
  \DeclareGraphicsExtensions{.eps}
\fi

\usepackage{enumitem}
\usepackage{physics}
\setlist[enumerate]{leftmargin=.5in}
\setlist[itemize]{leftmargin=.5in}


\newsiamremark{remark}{Remark}
\newsiamremark{example}{Example}
\crefname{example}{Example}{Examples}

\headers{DMD for Multiplicative Noise}{M.~Lee and J.~Park}

\title{An Optimized Dynamic Mode Decomposition Model Robust to Multiplicative Noise\thanks{Submitted to the editors DATE.
\funding{Minwoo Lee was supported by the National Research Foundation of Korea~(NRF) grant funded by the Korean government~(MSIT)~(No.~2021R1G1A1091278). Jongho Park was supported by NRF grant funded by MSIT~(No.~2021R1C1C2095193).
}}}

\author{Minwoo Lee\thanks{Department of Mechanical Engineering, Hanbat National University, Daejeon, 34158, Korea 
  (\email{mwlee@hanbat.ac.kr}).}
\and Jongho Park\thanks{Natural Science Research Institute, KAIST, Daejeon, 34141, Korea
  (\email{jongho.park@kaist.ac.kr}, \url{https://sites.google.com/view/jonghopark}).}
}

\ifpdf
\hypersetup{
  pdftitle={An Optimized Dynamic Mode Decomposition Model Robust to Multiplicative Noise},
  pdfauthor={M.~Lee and J.~Park}
}
\fi

\newcommand\gap{\hspace{0.1cm}}
\newcommand\smallgap{\hspace{0.05cm}}

\newcommand\rT{\mathrm{T}}
\newcommand\tH{\widetilde{H}}
\newcommand\cD{\mathcal{D}}
\newcommand\cE{\mathcal{E}}
\newcommand\cJ{\mathcal{J}}
\newcommand\cA{\mathcal{A}}
\newcommand\cN{\mathcal{N}}
\newcommand\cS{\mathcal{S}}
\newcommand\cV{\mathcal{V}}
\newcommand\tcE{\widetilde{\mathcal{E}}}
\newcommand\tcS{\widetilde{\mathcal{S}}}

\newcommand\intO{\int_{\Omega}}

\newcommand{\exact}{\mathrm{exact}}
\newcommand{\recon}{\mathrm{recon}}
\newcommand{\clean}{\mathrm{clean}}
\newcommand{\Ma}{\mathrm{Ma}}

\let\Re\relax
\DeclareMathOperator{\Re}{Re}
\let\Im\relax
\DeclareMathOperator{\Im}{Im}
\DeclareMathOperator*{\argmin}{\arg\min}
\DeclareMathOperator*{\argmax}{\arg\max}
\DeclareMathOperator{\proj}{proj}


\begin{document}

\maketitle

\begin{abstract}
Dynamic mode decomposition~(DMD) is an efficient tool for decomposing spatio-temporal data into a set of low-dimensional modes, yielding the oscillation frequencies and the growth rates of physically significant modes. In this paper, we propose a novel DMD model that can be used for dynamical systems affected by multiplicative noise. We first derive a maximum a posteriori~(MAP) estimator for the data-based model decomposition of a linear dynamical system corrupted by certain multiplicative noise. Applying penalty relaxation to the MAP estimator, we obtain the proposed DMD model whose epigraphical limits are the MAP estimator and the conventional optimized DMD model. We also propose an efficient alternating gradient descent method for solving the proposed DMD model, and analyze its convergence behavior. The proposed model is demonstrated on both the synthetic data and the numerically generated one-dimensional combustor data, and is shown to have superior reconstruction properties compared to state-of-the-art DMD models. Considering that multiplicative noise is ubiquitous in numerous dynamical systems, the proposed DMD model opens up new possibilities for accurate data-based modal decomposition. 
\end{abstract}

\begin{keywords}
  dynamic mode decomposition, multiplicative noise, variational model, alternating descent
\end{keywords}

\begin{AMS}
  37M10, 49M37, 49R05, 65P99
\end{AMS}

\section{Introduction} 
Various natural and engineered systems exhibit complex spatio-temporal behavior, which often involves nonlinear and high-dimensional dynamics. In many cases, however, the system's dynamics are governed by a few significant modes that represent the coherent features of the system. From a practical point of view, it is essential to extract these modes from the experimental data, so as to identify the fundamental dynamics, analyze the underlying physics, and build a low-dimensional model of the system~\cite{CTR:2012,holmes2012}. Over the past few decades, various data-based modal decomposition techniques have been proposed and applied to analyze complex dynamical systems, including fluid flow~\cite{Schmid:2010}, combustion system~\cite{luong2021}, neural activity recording~\cite{brunton2016}, and spread of infectious disease~\cite{proctor2015}, among many others.

Two classes of data-based modal decomposition techniques are commonly used: proper orthogonal decomposition~(POD) and dynamic mode decomposition~(DMD). POD, which is also known as the principal component analysis in the statistics community, is a method of obtaining a low-dimensional approximation by projecting the full dynamical system onto a set of spatially orthogonal basis functions~\cite{rowley2005,sirovich1987}. Although POD can efficiently decompose a physical field to a lower order system, it suffers from several limitations. For instance, the basis functions drawn from POD do not necessarily represent the physically significant modes. Thus, a reduced-order model constructed from POD can be inaccurate due to the user's wrong choice of modes~\cite{ilak2008}. Furthermore, POD is sensitive to the data used and is therefore difficult to be used when the experimental data is contaminated~\cite{rathinam2003}. 

On the contrary, the second approach, DMD, computes eigenvalues and eigenvectors of a linear-approximated model that represent the full dynamics of the system~\cite{Schmid:2010, TRLBK:2014}. Specifically, a dynamical system $\dot{z}=f(z(t))$ at equally spaced time space  $\{t_n \}_{1 \leq n \leq N}$ is approximated as $\xi^{n+1}=A \xi^n$, where $\xi^n \in \mathbb{C}^M$, $1 \leq n\leq N$, is a snapshot at time $t_n$ and $A$ is a linear operator. Then, a set of eigenvalues and eigenvectors of $A$ is found from the snapshots $\{ \xi^n \}_n$. When applied to nonlinear systems, DMD can be viewed as a method for finding approximate modes of the Koopman operator~\cite{mezic2005}. Unlike POD, growth rates and frequencies associated with each mode can be drawn from DMD, enabling the construction of a physically meaningful low-order model~\cite{CTR:2012}. However, owing to the fact that DMD uses pairs of data~(one snapshot and the next), rather than the whole set of data at once, DMD is prone to the bias caused by sensor noise~\cite{DHWR:2016, hemati2017}. 

Addressing this issue, the \textit{optimized} DMD was proposed in~\cite{CTR:2012}; it processes the whole snapshot data at once. Specifically, this algorithm finds a set of eigenvalues that minimizes the residual between the original spatio-temporal data and the reduced-order model. Although the optimized DMD uses a computationally expensive optimization method, namely the Nelder--Mead simplex method, it is shown that the optimized DMD can reduce the bias of the original DMD. Later, Askham and Kutz~\cite{AK:2018} improved the optimized DMD by reshaping the above-mentioned minimization problem and adopting the Levenberg--Marquardt algorithm~\cite{AAAM:2011,Marquardt:1963}. They showed that their proposed algorithm is robust to noise and does not require the original data to be evenly spaced in time. Recently, Askham et al.~\cite{Askham:2022} further advanced the optimized DMD method by incorporating robust statistics. In particular, recognizing that optimized DMD is sensitive to outliers, the authors applied robust penalties and parameter constraints for bias reduction and future state prediction.

In this study, we build on the established optimized DMD algorithms by tailoring the presentation of Askham and Kutz~\cite{AK:2018}. Specifically, we focus on the fact that the existing optimized DMD algorithms such as~\cite{AK:2018} are designed particularly to handle additive Gaussian noise~(this claim will be discussed in \cref{Sec:additive} with details). In many physical systems, however, the noise is multiplicatively coupled to the system, i.e., it amplifies with the signal itself. For example, turbulence in the combustors~\cite{clavin1994}, instrumental instabilities in nuclear magnetic resonance devices~\cite{granwehr2007}, pump fluctuation of dye lasers~\cite{fox1984,short1982}, and electrohydrodynamic instability in liquid crystals~\cite{brand1985} act as the source of the multiplicative noise, to name just a few. The effect of multiplicative noise on a system has been studied extensively over the past few decades, because such noise not only contaminates the signal but also affects the dynamical stability of the system~\cite{lieuwen2005, waugh2011}. Therefore, when analyzing the system influenced by the multiplicative noise, it is crucial to remove or suppress the effect of noise to unveil the original dynamics of the system.

In this paper, we propose a data-based modal decomposition algorithm that can accurately decompose a system affected by multiplicative noise. We combine the ideas of the conventional optimized DMD model and an image denoising model specific to multiplicative noise proposed in~\cite{AA:2008}, aiming to develop a novel optimized DMD model that is robust to multiplicative noise. Specifically, by closely following~\cite{AA:2008}, we construct a maximum a posteriori~(MAP) estimator for data-based modal decomposition of a linear dynamical system corrupted by gamma multiplicative noise. Because the constructed MAP estimator is complicated to solve numerically, an appropriate penalty relaxation technique should be applied to the MAP estimator to obtain the proposed optimized DMD model. We also present an efficient numerical algorithm to solve the proposed DMD model; we propose an alternating gradient descent method and suggest how to obtain a good initial guess for the algorithm. Convergence properties of the proposed alternating descent method are mathematically analyzed. Finally, we demonstrate the proposed DMD model on three numerical systems and show the reconstruction properties. We note that this paper is closely related to a recently published paper~\cite{Askham:2022} in the sense that both~\cite{Askham:2022} and this paper utilize statistical knowledge to design novel optimized DMD models that overcome particular difficulties.

This paper is organized as follows. In \cref{Sec:Preliminaries}, we provide preliminaries required for this paper, focusing on the optimized DMD model~\cite{AK:2018} and the Aubert--Aujol denoising model~\cite{AA:2008}. In \cref{Sec:Proposed}, we propose a novel optimized DMD model that is robust to multiplicative noise and investigate some mathematical properties of the proposed model. Next, an efficient numerical solver for the proposed model is considered in \cref{Sec:Algorithm}. Numerical results of the proposed model for various dynamic systems, including an engineering problem, are presented in \cref{Sec:Numerical}. Lastly, we conclude our paper with remarks in \cref{Sec:Conclusion}.

\section{Preliminaries}
\label{Sec:Preliminaries}
In this section, we introduce notations that are used throughout this paper. We also summarize key features of the optimized DMD model presented in~\cite{AK:2018} and some important variational models for noise removal~\cite{AA:2008,ROF:1992}. Motivated by the existing works~\cite{AK:2018,AA:2008}, we design a novel optimized DMD model that is robust to multiplicative noise in \cref{Sec:Proposed}.

\subsection{Notation}
As in~\cite{GV:2013}, we mostly use the standard notation accompanied with some MATLAB column notation. Let $A \in \mathbb{C}^{M \times N}$ be a complex matrix of size $M \times N$, and let $\xi \in \mathbb{C}^N$ be a complex vector of length $N$. We have the following list of notation associated with $A$ and $\xi$:

\begin{itemize}
\item $A_{ij}$ denotes the entry of $A$ in the $i$th row and $j$th column.
\item $A(i, :)$ denotes the $i$th row vector of $A$.
\item $A(:, j)$ denotes the $j$th column vector of $A$.
\item $A^{\rT}$ denotes the transpose of $A$.
\item $A^*$ denotes the Hermitian transpose of $A$.
\item $A^{\dag}$ denotes the Moore--Penrose pseudoinverse of $A$.
\item $\| A \|_F$ denotes the Frobenius norm of $A$.
\item $\| \xi \|_2$ denotes the $\ell^2$ norm of $\xi$.
\end{itemize}

As many matrices and vectors related to various physical dimensions will appear in this paper, it is convenient to use a unified notation for indices with respect to physical dimensions. In what follows, we use the indices $m$, $n$, and $r$ for the space, time, and rank dimensions, respectively.

\subsection{Optimized dynamic mode decomposition}
\label{Sec:additive}
For the sake of completeness, we present a brief summary of the optimized DMD model, following the presentation of Askham and Kutz~\cite{AK:2018}. Let $X = [\xi^1, \dots, \xi^N] \in \mathbb{R}^{M \times N}$ be a matrix of snapshots, where each $\xi^n = X(:, n) \in \mathbb{R}^M$, $1 \leq n \leq N$, represents the snapshot at time $t_n \in \mathbb{R}$~($t_1 < \dots < t_N$). We write $H = X^{\rT}$. Then the optimized DMD model is written as
\begin{equation}
\label{AK}
\min_{\alpha \in \mathbb{C}^R, \smallgap B \in \mathbb{C}^{R \times M}} \frac{1}{2} \left\| H - \Phi (\alpha) B \right\|_F^2,
\end{equation}
where $R \in \mathbb{Z}_{>0}$ is a target rank and $\Phi (\alpha) \in \mathbb{C}^{N \times R}$ is defined by
\begin{equation}
\label{Phi}
\Phi (\alpha)_{nr} = e^{\alpha_r t_n}, \quad 1 \leq n \leq N, \gap 1 \leq r \leq R.
\end{equation}
In order to investigate the meaning of~\cref{AK} in more details, we first write $B^{\rT} = [\beta^1, \dots, \beta^R]$, where $\beta^r \in \mathbb{C}^M$, $1 \leq r \leq R$. It follows that
\begin{equation*}
\frac{1}{2} \left\| H - \Phi (\alpha) B \right\|_F^2
= \frac{1}{2} \sum_{n=1}^N \| H(n, :) - \Phi (\alpha) (n, :) B \|_2^2
= \frac{1}{2} \sum_{n=1}^N \left\| \xi^n - \sum_{r=1}^R e^{\alpha_r t_n} \beta^r \right\|_2^2.
\end{equation*}
That is, the model~\cref{AK} finds an $\ell^2$-best approximation for $\xi^n$ of the form $\sum_{r=1}^R e^{\alpha_r t_n} \beta^r$. In what follows, we refer to~\cref{AK} as the $\ell^2$-optimized DMD model. Recall that, for a linear dynamic system
\begin{equation}
\label{dynamic}
\dot{z} (t) = A z (t), \quad z (0) = z^0
\end{equation}
with a diagonalizable matrix $A \in \mathbb{R}^{M \times M}$ and $z^0 \in \mathbb{R}^M$, the solution is given by
\begin{equation*}
z (t) = \sum_{m=1}^M e^{\alpha_m t} \beta^m,
\end{equation*}
where $\alpha_m \in \mathbb{C}$ is an eigenvalue of $A$ and $\beta^m \in \mathbb{C}^M$ is an eigenvector associated with $\alpha_m$, $1 \leq m \leq M$. In this sense, the $\ell^2$-optimized DMD model~\cref{AK} can be interpreted as follows: the entries of $\alpha$ approximate the $R$ dominant eigenvalues of a linear operator underlying the time series $X$, and the rows of $B$ approximate eigenvectors associated with the entries of $\alpha$.

One may eliminate the variable $B$ from~\cref{AK} by variable projection~\cite{GP:1973}. For a fixed $\alpha \in  \mathbb{C}^R$, the variable $B$ minimizing~\cref{AK} has a closed-form formula
\begin{equation}
\label{varpro}
B = \Phi (\alpha)^{\dag} H.
\end{equation}
If we substitute~\cref{varpro} into~\cref{AK}, then we obtain the following minimization problem whose variable is $\alpha$ only:
\begin{equation}
\label{AK_varpro}
\min_{\alpha \in \mathbb{C}^R} \frac{1}{2} \| H - \Phi (\alpha) \Phi (\alpha)^{\dag} H \|_F^2.
\end{equation}
For the equivalence relation between~\cref{AK} and~\cref{AK_varpro}, see~\cite[Theorem~2.1]{GP:1973}. Since~\cref{AK_varpro} is a nonlinear least squares problem and the dimension $R$ is not very big in general, a good strategy to solve~\cref{AK_varpro} is to use the Levenberg--Marquardt algorithm~\cite{AAAM:2011,Marquardt:1963}. One may refer to~\cite{AK:2018} for implementation details of the Levenberg--Marquardt algorithm for the $\ell^2$-optimized DMD model~\cref{AK_varpro}.

It was shown by numerical experiments in~\cite{AK:2018} that a strong point of the $\ell^2$-optimized DMD model is that it is more robust to additive noise than other existing DMD models such as~\cite{DHWR:2016,TRLBK:2014}. That is, the $\ell^2$-optimized DMD model results in more accurate eigenvalues and eigenvectors than the other models, even in the presence of additive noise of high variance. As another advantage of the model, because it can be regarded as a particular case of nonlinear fitting problem, it allows data collected at unevenly spaced sample times.

\subsection{Variational noise removal}
After a pioneering work of Rudin et al.~\cite{ROF:1992}, variational models have been broadly used in the field of signal and image processing for the purpose of denoising data. Here, we review several variational denoising models for image processing~\cite{AA:2008,ROF:1992}. Let $\Omega \subset \mathbb{R}^2$ be an image domain, and let $\cV$ be a suitable Banach space for digital images, e.g., $\cV = L^2 (\Omega)$. Suppose that we have a noisy image $f \in \cV$ and want to find a denoised counterpart $u \in \cV$. If we model the noise in $f$ as additive Gaussian noise, then we get the following linear inverse problem:
\begin{equation}
\label{inverse_add}
f = u + \epsilon,
\end{equation}
where the pointwise value of the noise $\epsilon$ is normally distributed with mean $0$ and variance $\sigma^2$ for some $\sigma > 0$. A popular approach to solve~\cref{inverse_add} is to find a MAP estimator for $u$~\cite{Getreuer:2012}. Noting that the conditional probability density $p(f|u)$ is given by
\begin{equation*}
p(f|u) = \frac{1}{\sqrt{2\pi} \sigma}  \exp \left( -\frac{1}{2\sigma^2} \intO (f-u)^2\,dx \right),
\end{equation*}
we can compute the MAP estimator for $u$ as follows:
\begin{equation} \begin{split}
\label{MAP_ROF}
\argmax_{u \in  \cV} p(u|f) &= \argmax_{u \in \cV} \frac{p(u) p(f|u)}{p(f)} \\
&= \argmin_{u \in \cV} \left\{ - \log p(u) - \log p(f|u) \right\} \\
&= \argmin_{u \in \cV} \left\{ \frac{1}{2\sigma^2} \intO (f-u)^2 \,dx + \phi (u) \right\},
\end{split} \end{equation}
where we used the Bayes' theorem in the first equality, and $\phi (u) = - \log p(u)$ is the prior on $u$, an a priori assumption on the likelihood of $u$. If we set $\phi(u)$ by the total variation of $u$~(see, e.g.,~\cite{LP:2020} for the definition of the total variation), then the last line of~\cref{MAP_ROF} becomes the celebrated Rudin--Osher--Fatemi model~\cite{ROF:1992}.
We note that other choices of $\phi (u)$ in~\cref{MAP_ROF} may yield denoising models of different purposes such as~\cite{BKP:2010,SKC:2003}.

Meanwhile, one may consider a situation that the noise in $f$ is multiplicative. We assume that
\begin{equation}
\label{inverse_mult}
f = u\epsilon,
\end{equation}
where $u > 0$ and the pointwise value of the noise $\epsilon$ follows the gamma distribution of mean $1$ and variance $\sigma^2$. Proceeding similarly to~\cref{MAP_ROF}, a MAP estimator for $u$ satisfying~\cref{inverse_mult} can be characterized as a solution of the following Aubert--Aujol model~\cite{AA:2008}:
\begin{equation}
\label{AA}
\min_{u \in \cV} \left\{ \frac{1}{\sigma^2} \int_{\Omega} \left( \log u + \frac{f}{u} \right) \,dx + \phi (u) \right\}.
\end{equation}
In the field of image processing, a typical choice for the prior $\phi (u)$ in~\cref{AA} is the total variation of $u$. Practical performance of the model~\cref{AA} for multiplicative noise removal can be found in~\cite{AA:2008,LNS:2010}.

As we have observed in~\cref{MAP_ROF,AA}, it is effective to use different data fidelity terms in denoising models for different kinds of noise. Several notable works \cite{CE:2005,LCA:2007,Nikolova:2004} have been on tailored data fidelity terms for various types of noise. Dependency of the quality of noise removal on data fidelity terms can be found in, e.g.,~\cite{LP:2020}.

\section{Proposed model}
\label{Sec:Proposed}
The purpose of this section is to propose a novel optimized DMD model that is robust to multiplicative noise. The essential idea of the construction of our proposed model is to combine the $\ell^2$-optimized DMD model~\cref{AK} and the Aubert--Aujol denoising model~\cref{AA}. In what follows, the indices $n$ and $m$ run from $1$ to $N$ and $M$, respectively.

First, we observe that a solution of~\cref{AK} can be regarded as a MAP estimator using a certain prior. Suppose that the matrix of observed snapshots $H$ in~\cref{AK} is expressed as the sum of a matrix $\tH \in \mathbb{R}^{N \times M}$ representing clean snapshots and a noise matrix $E \in \mathbb{R}^{N \times M}$ whose entries follow the normal distribution of mean $0$ and variance $\sigma^2$, i.e.,
\begin{equation}
\label{inverse_AK}
H_{nm} = \tH_{nm} + E_{nm}, \gap E_{nm} \sim N(0, \sigma^2 ).
\end{equation} 
For a fixed $R \in \mathbb{Z}_{>0}$, we define the set $\cD_R \subset \mathbb{R}^{N \times M}$ by
\begin{equation}
\label{D_R}
\cD_R = \left\{ K \in \mathbb{R}^{N \times M} : K = \Phi (\alpha) B \textrm{ for some } \alpha \in \mathbb{C}^R \textrm{, } B \in \mathbb{C}^{R \times M} \right\},
\end{equation}
where $\Phi (\alpha) \in \mathbb{C}^{N \times R}$ was defined in~\cref{Phi}.
Let $\chi_{\cD_R} \colon \mathbb{R}^{N \times M} \rightarrow \overline{\mathbb{R}}$ denote the characteristic function of $\cD_R$, i.e.,
\begin{equation*}
\chi_{\cD_R} (K) = \begin{cases}
0 & \textrm{ if } K \in \cD_R, \\
\infty & \textrm{ otherwise.}
\end{cases}
\end{equation*}
In DMD, we assume that dynamic features of the snapshots are determined by a few governing eigenvalues and eigenvectors of the dynamical system. In this perspective, a natural a priori assumption on $\tH$ is that $\tH$ belongs to the set $\cD_R$; we set
\begin{equation}
\label{DMD_prior}
- \log p(\tH) = \chi_{\cD_R} (\tH),
\end{equation}
with the convention $- \log 0 = \infty$. In the same manner as~\cref{MAP_ROF}, one can obtain the MAP estimator for $\tH$ as follows:
\begin{equation}
\label{MAP_AK}
\argmax_{\tH \in \mathbb{R}^{N \times M}} p(\tH|H)
= \argmin_{\tH \in \mathbb{R}^{N \times M}} \left\{ \frac{1}{2\sigma^2} \| H - \tH \|_F^2 + \chi_{\cD_R} (\tH) \right\}.
\end{equation}
In the minimization problem on the right-hand side of~\cref{MAP_AK}, one may drop the constant $\sigma^2$ since $\chi_{\cD_R}$ takes a value either $0$ or $\infty$. Invoking the definition~\cref{D_R} of the set $\cD_R$, we introduce two auxiliary variables $\alpha \in \mathbb{C}^R$ and $B \in \mathbb{C}^{R \times M}$ and set $\tH = \Phi (\alpha) B$. Then, the right-hand side of~\cref{MAP_AK} reduces to 
\begin{equation*}
\min_{\alpha \in \mathbb{C}^R, \smallgap B \in \mathbb{C}^{R \times M}} \frac{1}{2} \| H - \Phi (\alpha) B \|_F^2,
\end{equation*}
which is identical to the $\ell^2$-optimized DMD model~\cref{AK}. In conclusion, we have derived the $\ell^2$-optimized DMD model as the MAP estimator with the DMD prior~\cref{DMD_prior} for the denoising problem~\cref{inverse_AK}.

\begin{remark}
\label{Rem:improper}
In the prior assumption~\cref{DMD_prior} on $\tH$, it is not ensured that $p(\tH)$ is a probability density. However, thanks to the notion of improper prior, Bayesian analysis can be done successfully without assuming the prior knowledge is given by a probability density. One may refer to~\cite{LT:2018,TL:2010} for mathematically rigorous treatments on the notion of improper prior.
\end{remark}

Now, motivated by the observation that the $\ell^2$-optimized DMD model is a MAP estimator and the relation between the Rudin--Osher--Fatemi~\cite{ROF:1992} and Aubert--Aujol~\cite{AA:2008} models, we design an optimized DMD model that is suitable for data corrupted by multiplicative noise. Similar to~\cref{inverse_mult}, we consider the setting
\begin{equation}
\label{inverse_proposed}
H_{nm} = \tH_{nm} E_{nm},
\end{equation}
where each $E_{nm}$ follows the gamma distribution of mean $1$ and variance $\sigma^2$. Since $E_{nm}$ is always positive, $\tH_{nm}$ must have the same sign as $H_{nm}$. Hence, we may restrict the solution space for $\tH$ as the closed convex subset $\cS_{H}$ of $\mathbb{R}^{N \times M}$ defined by 
\begin{equation*}
\cS_{H} = \left\{ K \in \mathbb{R}^{N \times M} : K_{nm} H_{nm} \geq 0 \textrm{ if } H_{nm} \neq 0, \gap
K_{nm} = 0 \textrm{ if } H_{nm} = 0 \right\},
\end{equation*}
i.e., the collection of $K \in \mathbb{R}^{N \times M}$ such that $K_{nm}$ and $H_{nm}$ have the same sign.
Note that the value of $\tH_{nm}$ is determined by $0$ if $H_{nm} = 0$.
Invoking~\cite[Proposition~3.1]{AA:2008}, for $H_{nm} \neq 0$, we have
\begin{equation}
\label{gamma_density}
p ( H_{nm} | \tH_{nm}) = p \left( \frac{H_{nm}}{\tH_{nm}} \right) \frac{1}{|\tH_{nm}|}
= \frac{\beta^{\beta}}{|\tH_{nm}|^{\beta} \Gamma (\beta)} |H_{nm}|^{\beta - 1} \exp \left( - \frac{\beta H_{nm}}{\tH_{nm}} \right),
\end{equation}
where $\beta = 1/ \sigma^2$. Assuming all the entries of $H$ and $\tH$ are mutually independent, it follows by a similar argument to~\cref{MAP_ROF} that
\begin{equation}
\begin{split}
\label{MAP_proposed}
\argmax_{\tH \in \cS_H} p (\tH | H) &= \argmax_{\tH \in \cS_H} \prod_{H_{nm} \neq 0} p (\tH_{nm} | H_{nm}) \\
&= \argmin_{\tH \in \cS_H} \left\{ - \log p(\tH) - \sum_{H_{nm} \neq 0} \log p (H_{nm} | \tH_{nm}) \right\} \\
&= \argmin_{\tH \in \cS_H} \left\{\beta \sum_{H_{nm} \neq 0} \left( \log |\tH_{nm} | + \frac{H_{nm}}{\tH_{nm}} \right) + \chi_{\cD_R} (\tH) \right\} \\
&= \argmin_{\tH \in \cS_H} \left\{\sum_{H_{nm} \neq 0} \left( \log |\tH_{nm} | + \frac{H_{nm}}{\tH_{nm}} \right) + \chi_{\cD_R} (\tH) \right\}  ,
\end{split}
\end{equation}
where we used~\cref{DMD_prior,gamma_density} in the penultimate equality and dropped $\beta$ in the last equality. That is, the last line of~\cref{MAP_proposed}, which can equivalently be written as
\begin{equation}
\label{proposed_nonpenalty}
\min_{\alpha \in \mathbb{C}^R, \smallgap B \in \mathbb{C}^{R \times M}}
\sum_{H_{nm} \neq 0} \left( \log |(\Phi (\alpha) B)_{nm} | + \frac{H_{nm}}{(\Phi (\alpha) B)_{nm}} \right),
\end{equation}
is the MAP estimator with the DMD prior~\cref{DMD_prior} for the multiplicative denoising problem~\cref{inverse_proposed}.

Unfortunately, the structure of either the last line of~\cref{MAP_proposed} or~\cref{proposed_nonpenalty} is so complicated that it is difficult to design a suitable numerical solver for it. To simplify the model, we relax the $\chi_{\cD_R}(\tH)$-term by introducing a quadratic penalty term~\cite[section~1.A]{RW:2009}:
\begin{equation}
\label{proposed_penalty}
\min_{\tH \in \cS_H, \smallgap \alpha \in \mathbb{C}^R, \smallgap B \in \mathbb{C}^{R \times M}} \left\{ \sum_{H_{nm} \neq 0} \left( \log |\tH_{nm}| + \frac{H_{nm}}{\tH_{nm}} \right) + \frac{\eta}{2} \| \tH - \Phi (\alpha ) B \|_F^2 \right\},
\end{equation}
where $\eta > 0$ is a tunable parameter. In~\cref{proposed_penalty}, a minimizer with respect to $B$ for fixed $\tH \in \cS_H$ and $\alpha \in \mathbb{C}^R$ is given by $B = \Phi (\alpha)^{\dag} \tH$. Hence, similar to~\cref{AK_varpro}, the variable $B$ in~\cref{proposed_penalty} can be eliminated by variable projection~\cite{GP:1973} as follows:
\begin{equation}
\label{proposed_varpro}
\min_{\tH \in \cS_H, \smallgap \alpha \in \mathbb{C}^R}
\left\{ \cE ( \tH, \alpha ) := \sum_{H_{nm} \neq 0} \left( \log |\tH_{nm}| + \frac{H_{nm}}{\tH_{nm}} \right) + \frac{\eta}{2} \| \tH - \Phi (\alpha ) \Phi (\alpha)^{\dag} \tH \|_F^2 \right\}.
\end{equation}
The problem~\cref{proposed_varpro} is our proposed optimized DMD model.
Since the $\sum_{H_{nm} \neq 0}$-term in~\cref{proposed_varpro} has a similar form to the Aubert--Aujol model~\cref{AA}, $\tH$ is interpreted as a matrix of denoised snapshots free to multiplicative noise, obtained from the original data $H$.
Meanwhile, as $\alpha$ is yielded by minimizing the $\frac{\eta}{2}\|\cdot\|_F^2$-term similar to~\cref{AK_varpro}, it can be regarded as a good approximation for the DMD eigenvalues corresponding to the denoised data $\tH$.
In the remainder of this paper, we study mathematical and numerical aspects of the proposed model~\cref{proposed_varpro}.

\begin{remark}
\label{Rem:convention} 
One may notice that $\cE (\tH, \alpha)$ is not defined by the formula~\cref{proposed_varpro} if $\tH_{nm} = 0$ for any $n$ and $m$ such that $H_{nm} \neq 0$. In this case, we simply set $\cE (\tH, \alpha) = \infty$ in view of the following fact: for a nonzero real constant $a$, it satisfies that $\log |x| + a/x \rightarrow \infty$ if $x \rightarrow 0$ keeping the same sign as $a$.
\end{remark}

\begin{remark}
\label{Rem:unevenly}
A remarkable aspect of the $\ell^2$-optimized DMD model~\cref{AK_varpro} is that it allows unevenly spaced sample times~\cite{AK:2018}. By construction, the proposed model~\cref{proposed_varpro} naturally inherits such an advantage of the $\ell^2$-optimized DMD model and accommodates data collected at unevenly spaced sample times. We mention that DMD with unevenly spaced data has been considered as an important topic; see, e.g.,~\cite{GMP:2015,LC:2016}.
\end{remark}

\subsection{Mathematical study}
Due to the nature of DMD, optimized DMD models such as~\cref{AK_varpro,proposed_varpro} may admit nonunique global minimizers. The following example describes a situation when optimized DMD models have infinitely many global minimizers.

\begin{example}
\label{Ex:nonunique}
We take any $N \geq 2$, and set $M = R = 2$.
For $1 \leq n \leq N$, let $\xi^n = [0, 1]^{\rT}$ be a snapshot at time $t_n = 2n\pi$, i.e.,
\begin{equation*}
X = \begin{bmatrix} \xi^{1}, \dots, \xi^{N} \end{bmatrix}
= \begin{bmatrix}
0 & \dots & 0 \\
1 & \dots & 1
\end{bmatrix}
\in \mathbb{R}^{M \times N}.
\end{equation*}
We consider the linear dynamic system
\begin{equation}
\label{nonunique}
\dot{z}^k (t) = \begin{bmatrix} 0 & k \\ -k& 0 \end{bmatrix} z^k(t), \quad
z^k(0) = \begin{bmatrix} 0 \\ 1 \end{bmatrix},
\end{equation}
for $k \in \mathbb{Z}_{>0}$. It is easy to verify that the eigenvalues of the system matrix of~\cref{nonunique} are $\pm k i$ and that the solution $z^k (t)$ is given by
\begin{equation*}
z^k (t) = \begin{bmatrix} \sin k t \\ \cos k t \end{bmatrix}.
\end{equation*}
Hence, we have
\begin{equation*}
\xi^n = z^k (t_n), \quad 1 \leq n \leq N, \gap k \in \mathbb{Z}_{> 0}.
\end{equation*}
This implies that the $\ell^2$-optimized DMD model~\cref{AK_varpro} possesses infinitely many solutions $\alpha = [ki, -ki]^{\rT}$, $k \in \mathbb{Z}_{>0}$. Moreover, one can check that the proposed model~\cref{proposed_varpro} also admits infinitely many global minimizers $(\tH, \alpha) = (X^{\rT}, [ki, -ki]^{\rT})$, $k \in \mathbb{Z}_{>0}$.
\end{example}

\Cref{Ex:nonunique} implies that optimized DMD models such as~\cref{AK_varpro,proposed_varpro} may be noncoercive~(or not level-bounded), which makes variational analysis of the models notoriously difficult; note that the coercivity of a variational problem is a standard assumption in variational analysis to ensure the existence of a solution~\cite{RW:2009}. Here, to avoid such a difficulty, we deal with the \textit{localized} version of the proposed optimized DMD model~\cref{proposed_varpro} given by
\begin{equation}
\label{proposed_varpro_local}
\min_{\tH \in \cS_H, \smallgap \alpha \in \cA}
\left\{ \cE ( \tH, \alpha ) := \sum_{H_{nm} \neq 0} \left( \log |\tH_{nm}| + \frac{H_{nm}}{\tH_{nm}} \right) + \frac{\eta}{2} \| \tH - \Phi (\alpha ) \Phi (\alpha)^{\dag} \tH \|_F^2 \right\}
\end{equation}
instead of the original model, where $\cA$ is a closed and bounded subset of $\mathbb{C}^R$.
Intuitively,~\cref{proposed_varpro_local} corresponds to a situation when we have an additional piece of information on the bounds of physically meaningful DMD eigenvalues of~\cref{proposed_varpro}.
Before presenting an existence result for~\cref{proposed_varpro_local}, we need the following trivial fact.

\begin{lemma}
\label{Lem:log}
For $a \in \mathbb{R} \setminus \{ 0\}$, we define the function $g \colon \mathbb{R} \setminus \{ 0\} \rightarrow \mathbb{R}$ by
\begin{equation*}
g(x) = \log |x| + \frac{a}{x}, \quad x \in \mathbb{R} \setminus \{ 0\}.
\end{equation*}
Then we have the following:
\begin{enumerate}
\item If $a >0$, then the function $g(x)$~($x > 0$)  has the minimum $\log a + 1$ at $x = a$.
\item If $a < 0$, then the function $g(x)$~($x < 0$) has the minimum $\log (-a) + 1$ at $x = a$.
\end{enumerate}
\end{lemma}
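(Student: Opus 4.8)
The plan is to treat this as a routine single-variable calculus exercise, handling the two sign cases in parallel. Since the function $g$ involves $\log|x|$ and $a/x$ with a fixed nonzero real $a$, the natural domain for optimization splits according to the sign of $x$, and the claim pins down which half-line is relevant in each case.

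First I would consider the case $a > 0$ and restrict attention to $x > 0$, where $g(x) = \log x + a/x$. Differentiating gives $g'(x) = 1/x - a/x^2 = (x-a)/x^2$, so the unique critical point on $(0,\infty)$ is $x = a$. The sign of $g'$ is negative on $(0,a)$ and positive on $(a,\infty)$, so $g$ is strictly decreasing then strictly increasing, whence $x = a$ is the global minimizer on $(0,\infty)$; evaluating, $g(a) = \log a + a/a = \log a + 1$. (One could equally note $g''(x) = -1/x^2 + 2a/x^3 = (2a - x)/x^3$, which is positive at $x=a$, confirming a strict local minimum, but the monotonicity argument already gives the global statement.) For completeness I would remark that on $x > 0$ one has $g(x) \to \infty$ as $x \to 0^+$ and as $x \to \infty$, which is consistent with $a$ being the global minimizer and is the fact invoked in \cref{Rem:convention}.

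Next I would dispatch the case $a < 0$ on $x < 0$ by the substitution $x = -y$ with $y > 0$, reducing to the previous case. Writing $b = -a > 0$, we get $g(-y) = \log|-y| + a/(-y) = \log y + b/y$, which by part (1) attains its minimum over $y > 0$ at $y = b$, with value $\log b + 1 = \log(-a) + 1$. Translating back, the minimum of $g$ over $x < 0$ occurs at $x = -b = a$, with minimum value $\log(-a) + 1$, exactly as claimed.

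There is essentially no obstacle here — the lemma is, as the authors say, trivial — so the only thing to be careful about is bookkeeping: keeping the domain restriction ($x>0$ when $a>0$, $x<0$ when $a<0$) explicit, correctly differentiating $\log|x|$ (whose derivative is $1/x$ regardless of the sign of $x$), and making sure the reflection argument in the second case lands the minimizer at $x=a$ rather than $x=-a$. I would write the proof in four or five lines, doing the $a>0$ case in detail and the $a<0$ case by the one-line reduction $x \mapsto -x$.
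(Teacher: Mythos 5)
Your proof is correct. The paper states \cref{Lem:log} without proof, labeling it a trivial fact, and your argument — the sign analysis of $g'(x) = (x-a)/x^2$ on $(0,\infty)$ for $a>0$, followed by the reflection $x \mapsto -x$ for $a<0$ — is exactly the standard calculus verification the authors implicitly have in mind, with the minimizer and minimum value computed correctly in both cases.
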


Now, we have the following existence theorem for~\cref{proposed_varpro_local}.

\begin{proposition}
\label{Prop:exist}
The localized optimized DMD model~\cref{proposed_varpro_local} admits a solution, i.e.,  it has a global minimizer in $\cS_H \times \cA$.
\end{proposition}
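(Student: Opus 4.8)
The plan is to apply the direct method of the calculus of variations: show that $\cE$ is bounded below on $\cS_H \times \cA$, take a minimizing sequence, extract a convergent subsequence using compactness, and verify that the limit is admissible and achieves the infimum by lower semicontinuity. The subtlety is that $\cE$ is an extended-real-valued function (it equals $\infty$ when some $\tH_{nm}$ vanishes while $H_{nm} \neq 0$), so I must be careful that the minimizing sequence does not run off to a configuration where the data-fidelity term blows up, and that $\tH$ itself does not escape to infinity.

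First I would establish coercivity and a lower bound. By \cref{Lem:log}, for each index pair with $H_{nm} \neq 0$ the term $\log|\tH_{nm}| + H_{nm}/\tH_{nm}$ is bounded below by $\log|H_{nm}| + 1$ on the feasible sign-constrained set $\cS_H$; summing over such pairs gives a finite lower bound for the fidelity part, and since the penalty term $\tfrac{\eta}{2}\|\tH - \Phi(\alpha)\Phi(\alpha)^+\tH\|_F^2$ is nonnegative, $\cE$ is bounded below on $\cS_H \times \cA$. Coercivity in $\tH$ follows because $\log|\tH_{nm}| \to \infty$ as $|\tH_{nm}| \to \infty$ for the nonzero-data entries (and the entries with $H_{nm}=0$ are pinned to $0$ in $\cS_H$), so sublevel sets of $\cE$ are bounded in the $\tH$-variable; combined with compactness of $\cA$, any minimizing sequence $(\tH^{(k)}, \alpha^{(k)})$ has a subsequence converging to some $(\tH^*, \alpha^*)$ with $\tH^* \in \cS_H$ (closedness) and $\alpha^* \in \cA$ (compactness).

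Next I would argue lower semicontinuity of $\cE$ at the limit. The map $\alpha \mapsto \Phi(\alpha)$ is continuous, but the projection $\Phi(\alpha)\Phi(\alpha)^+$ need not be continuous in $\alpha$ where the rank of $\Phi(\alpha)$ drops; however, the Frobenius-norm residual $\|\tH - \Phi(\alpha)\Phi(\alpha)^+\tH\|_F^2 = \min_{B}\|\tH - \Phi(\alpha)B\|_F^2$ is an infimum of continuous functions of $(\tH,\alpha)$ over $B$, hence upper semicontinuous, which is the wrong direction — so instead I would work directly with the unprojected functional~\cref{proposed_penalty} in the variables $(\tH, \alpha, B)$, where the penalty term is jointly continuous, and recover the result for~\cref{proposed_varpro} via the variable-projection equivalence (the two problems have the same infimum and a minimizer of one yields a minimizer of the other). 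For the fidelity term: each summand $x \mapsto \log|x| + a/x$ is continuous on the open set where $x$ has the correct sign and is nonzero, and tends to $+\infty$ as $x \to 0^{\pm}$, so it is lower semicontinuous as an extended-real function on all of $\mathbb{R}\setminus\{0\}$ extended by $+\infty$ at $0$; a finite sum of such terms is lower semicontinuous, and since $\cE(\tH^{(k)},\alpha^{(k)})$ stays bounded along the minimizing sequence, no entry $\tH^*_{nm}$ with $H_{nm}\neq 0$ can be $0$, so $\cE(\tH^*,\alpha^*) < \infty$. Passing to the limit, $\cE(\tH^*,\alpha^*) \leq \liminf_k \cE(\tH^{(k)},\alpha^{(k)}) = \inf \cE$, so $(\tH^*,\alpha^*)$ is a global minimizer.

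The main obstacle is the interplay between the non-lower-semicontinuous variable-projected penalty (because of possible rank drops in $\Phi(\alpha)$) and the blow-up of the fidelity term — these must be handled in tandem. The clean way around it, as sketched above, is to prove existence for the joint formulation~\cref{proposed_penalty} where continuity of the quadratic penalty is transparent, and then transfer to~\cref{proposed_varpro}; the only care needed there is to confirm the minimizing-$B$ formula $B = \Phi(\alpha)^+\tH$ keeps $B$ in a bounded set along the minimizing sequence (it does, since $\|\Phi(\alpha)B\|_F \leq \|\tH\|_F + \sqrt{2\cE/\eta}$ is bounded and one may restrict $B$ to the row space of $\Phi(\alpha)^{\rH}$), so the same compactness argument applies verbatim in the three-variable setting.
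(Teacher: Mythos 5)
Your overall skeleton is the same as the paper's: a direct-method argument in which \cref{Lem:log} supplies the lower bound, the blow-up of $\log|x|+a/x$ at $x=0$ and as $|x|\to\infty$ confines the $\tH$-component of a minimizing sequence to a compact subset of $\cS_H$, and compactness of $\cA$ handles $\alpha$. The paper packages the first two points as the explicit compact sublevel set $\tcS_H$ and then concludes in one line ``by the continuity of $\tcE$.'' Where you genuinely diverge is in the treatment of the penalty term: you correctly observe that $\alpha\mapsto\|\tH-\Phi(\alpha)\Phi(\alpha)^+\tH\|_F^2=\min_B\|\tH-\Phi(\alpha)B\|_F^2$ is an infimum of continuous functions, hence only upper semicontinuous, and can jump \emph{up} at a limit point where $\Phi(\alpha)$ drops rank (entries of $\alpha$ coalescing) --- the wrong direction for the direct method. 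The paper's continuity claim for $\tcE$ is valid only where $\Phi(\alpha)$ has locally constant (full column) rank, an assumption in the spirit of \cref{Rem:repeated} but not stated in \cref{Prop:exist}, so your added scruple identifies a real soft spot that the paper passes over.

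That said, your repair does not actually close the gap: moving to the three-variable functional \cref{proposed_penalty} makes the penalty jointly continuous, but you then need the $B$-component of the minimizing sequence to stay bounded, and boundedness of $\Phi(\alpha^{(k)})B^{(k)}$ together with restricting $B^{(k)}$ to the row space of $\Phi(\alpha^{(k)})^{\rH}$ only gives $\|B^{(k)}\|_F\le\|\Phi(\alpha^{(k)})B^{(k)}\|_F/\sigma_{\min}^{+}(\Phi(\alpha^{(k)}))$, where $\sigma_{\min}^{+}$ denotes the smallest nonzero singular value --- and that quantity tends to $0$ precisely in the rank-degenerate scenario you set out to handle. Limits of $\Phi(\alpha^{(k)})B^{(k)}$ with unbounded $B^{(k)}$ produce polynomial-times-exponential profiles lying outside $\cD_R$; this is the classical non-closedness of sets of exponential sums. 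In short, both arguments become rigorous under the additional hypothesis that $\Phi(\alpha)$ has full column rank for every $\alpha\in\cA$ with $\sigma_{\min}(\Phi(\alpha))$ bounded below on the compact set $\cA$ (e.g., the entries of each admissible $\alpha$ are uniformly separated): then $\Phi(\alpha)^+$ is continuous and uniformly bounded, the paper's one-line continuity claim is justified, your $B^{(k)}$ is bounded, and either route finishes. Without such a hypothesis your detour relocates the difficulty rather than resolving it, so you should either add the hypothesis explicitly or supply a genuinely different compactness argument for $B$.
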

\begin{proof}
We define a modified energy functional
\begin{equation}
\label{modified_energy}
\tcE (\tH, \alpha) =  \sum_{H_{nm} \neq 0} \left( \log |\tH_{nm}| + \frac{H_{nm}}{\tH_{nm}} - \log |H_{nm}| - 1 \right) + \frac{\eta}{2} \| \tH - \Phi (\alpha ) \Phi (\alpha)^{\dag} \tH \|_F^2.
\end{equation}
As $\tcE ( \tH, \alpha)$ and the proposed energy functional $\cE (\tH, \alpha)$ differ by a constant, minimizing $\tcE (\tH, \alpha)$ over $\cS_H \times \cA$ is equivalent to~\cref{proposed_varpro_local}; we consider the minimization problem for $\tcE (\tH, \alpha)$ instead of~\cref{proposed_varpro_local}.

Because $\tcE (\tH, \alpha)$ is nonnegative due to \cref{Lem:log}, we have
\begin{equation*}
\underline{\tcE} := \inf_{{\tH \in \cS_H, \smallgap \alpha \in \cA}} \tcE (\tH, \alpha) \geq 0.
\end{equation*}
We choose a sequence $\{( \tH^{(k)}, \alpha^{(k)} )\}_k$ in $\cS_H \times \cA$ such that $\lim_{k \rightarrow \infty} \tcE (\tH^{(k)}, \alpha^{(k)}) = \underline{\tcE}$ and $\tcE (\tH^{(k)}, \alpha^{(k)}) \leq \underline{\tcE} + 1$ for all $k$. It follows that
\begin{equation*}
\log |\tH_{nm}^{(k)}| + \frac{H_{nm}}{\tH_{nm}^{(k)}} - \log |H_{nm} | - 1 \leq \tcE (\tH^{(k)}, \alpha^{(k)}) \leq \underline{\tcE} + 1 
\end{equation*}
for all $n$ and $m$ such that $H_{nm} \neq 0$. Hence, the sequence $\{ \tH^{(k)} \}_k$ lies on the subset $\tcS_H$ of $\cS_H$ given by
\begin{equation*}
\tcS_H = \left\{ K \in \cS_H : \log |K_{nm}| + \frac{H_{nm}}{K_{nm}} \leq \log |H_{nm}| + \underline{\tcE} + 2 \textrm{ if } H_{nm} \neq 0 \right\}.
\end{equation*}
As both $\tcS_H$ and $\cA$ are closed and bounded, we can ensure that a limit point $(\tH^{(\infty)}, \alpha^{(\infty)})$ of the sequence $\{( \tH^{(k)}, \alpha^{(k)} )\}_k$ belongs to $\tcS_H \times \cA$. By the continuity of $\tcE (\tH, \alpha)$, we get $\tcE (\tH^{(\infty)}, \alpha^{(\infty)}) = \underline{\tcE}$. That is, $(\tH^{(\infty)}, \alpha^{(\infty)})$ is a global minimizer of $\tcE (\tH, \alpha)$ in $\cS_H \times \cA$.
\end{proof}

If we localize~\cref{proposed_penalty} in the same manner as~\cref{proposed_varpro_local}, then we get \begin{equation}
\label{proposed_penalty_local}
\min_{\tH \in \cS_H, \smallgap \alpha \in \cA, \smallgap B \in \mathbb{C}^{R \times M}} \left\{ \sum_{H_{nm} \neq 0} \left( \log |\tH_{nm}| + \frac{H_{nm}}{\tH_{nm}} \right) + \frac{\eta}{2} \| \tH - \Phi (\alpha ) B \|_F^2 \right\}.
\end{equation}
We observe how two minimization problems~\cref{proposed_varpro_local,proposed_penalty_local} are related. \Cref{Prop:equiv} summarizes the equivalence relation between~\cref{proposed_varpro_local,proposed_penalty_local}. As it can be shown by almost the same argument as~\cite[Proposition~3.1]{LPP:2019}, we omit its proof.

\begin{proposition}
\label{Prop:equiv}
If $(\tH, \alpha, B) \in \cS_H \times \cA \times \mathbb{C}^{R \times M}$ is a solution of~\cref{proposed_penalty_local}, then $(\tH, \alpha)$ is a solution of~\cref{proposed_varpro_local}.
Conversely, if $(\tH, \alpha) \in \cS_H \times \cA$ is a solution of~\cref{proposed_varpro_local}, then $(\tH, \alpha, \Phi (\alpha)^{\dag} \tH)$ is a solution of~\cref{proposed_penalty_local}.
\end{proposition}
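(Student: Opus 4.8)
The plan is to prove the equivalence by exploiting the variable projection identity $B = \Phi(\alpha)^+ \tH$ and the optimality property of the Moore--Penrose pseudoinverse, exactly as in the classical argument of~\cite{GP:1973} adapted here to the penalized setting. The key observation is that for any fixed $\tH \in \cS_H$ and $\alpha \in \cA$, the inner minimization
\begin{equation*}
\min_{B \in \mathbb{C}^{R \times M}} \frac{\eta}{2} \| \tH - \Phi(\alpha) B \|_F^2
\end{equation*}
is a standard linear least-squares problem whose minimum value equals $\frac{\eta}{2}\| \tH - \Phi(\alpha)\Phi(\alpha)^+ \tH \|_F^2$, attained at $B = \Phi(\alpha)^+ \tH$. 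Since the term $\sum_{H_{nm}\neq 0}(\log|\tH_{nm}| + H_{nm}/\tH_{nm})$ in~\cref{proposed_penalty} does not depend on $B$, partially minimizing~\cref{proposed_penalty} over $B$ produces exactly the functional $\cE(\tH,\alpha)$ of~\cref{proposed_varpro}; that is, $\cE(\tH,\alpha) = \min_{B} \big[ \text{objective of \cref{proposed_penalty}} \big]$ for every $(\tH,\alpha)$.

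First I would make this partial-minimization identity precise, handling the convention of \cref{Rem:convention} so that when $\tH_{nm}=0$ for some $H_{nm}\neq 0$ both sides equal $\infty$ and the identity still holds. Next, for the forward direction, suppose $(\tH,\alpha,B)$ solves~\cref{proposed_penalty} with optimal value $v$. Then $\cE(\tH,\alpha) \le \text{objective}(\tH,\alpha,B) = v$, while for any competitor $(\tH',\alpha')\in\cS_H\times\cA$ we have $\cE(\tH',\alpha') = \text{objective}(\tH',\alpha',\Phi(\alpha')^+\tH') \ge v \ge \cE(\tH,\alpha)$, so $(\tH,\alpha)$ minimizes $\cE$. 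For the converse, suppose $(\tH,\alpha)$ solves~\cref{proposed_varpro}; set $B = \Phi(\alpha)^+\tH$. Then for any $(\tH',\alpha',B')$ the identity gives $\text{objective}(\tH',\alpha',B') \ge \cE(\tH',\alpha') \ge \cE(\tH,\alpha) = \text{objective}(\tH,\alpha,B)$, so $(\tH,\alpha,B)$ solves~\cref{proposed_penalty}. The reference~\cite[Proposition~3.1]{LPP:2019} presumably carries out precisely this two-direction argument in an analogous penalized variable-projection context, which justifies omitting the details.

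I do not expect a genuine obstacle here: the argument is entirely formal once the partial-minimization identity is in place. The only point requiring a little care is the degenerate case $\tH_{nm}=0$ with $H_{nm}\neq 0$, where $\cE$ is defined to be $\infty$ by convention; one checks that such $\tH$ can never be part of a solution of either problem (the finiteness of $\underline{\tcE}$ from \cref{Prop:exist} rules it out), so it does not interfere with the equivalence. Apart from that, the proof is a direct transcription of the standard Golub--Pereyra variable projection equivalence, which is why the authors are content to cite~\cite{LPP:2019} rather than reproduce it.
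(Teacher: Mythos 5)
Your proposal is correct and takes essentially the approach the paper intends: the authors omit the proof and cite \cite[Proposition~3.1]{LPP:2019}, and that argument is exactly the partial-minimization identity $\cE(\tH,\alpha)=\min_{B} F(\tH,\alpha,B)$ with minimizer $B=\Phi(\alpha)^+\tH$ that you establish, followed by the two formal inequality chains. Your remark on the degenerate case $\tH_{nm}=0$ with $H_{nm}\neq 0$ (where both objectives are $+\infty$ by the convention of \cref{Rem:convention}, so such points are never optimal) is the only delicate point, and you handle it correctly.
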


Thanks to \cref{Prop:equiv}, we may refer to~\cref{proposed_penalty_local} as the proposed model as well. Recall that~\cref{proposed_penalty_local} is not a MAP estimator for the denoising problem~\cref{inverse_proposed} but a penalty relaxation of that. Therefore, it is necessary to analyze how well a solution of~\cref{proposed_penalty_local} approximates a solution of the exact MAP estimator
\begin{equation}
\label{proposed_nonpenalty_local}
\min_{\alpha \in \cA, \smallgap B \in \mathbb{C}^{R \times M}}
\sum_{H_{nm} \neq 0} \left( \log |(\Phi (\alpha) B)_{nm} | + \frac{H_{nm}}{(\Phi (\alpha) B)_{nm}} \right),
\end{equation}
i.e., the localized version of~\cref{proposed_nonpenalty}. The following proposition says that the proposed DMD model~\cref{proposed_penalty_local} acts like~\cref{proposed_nonpenalty_local} if the penalty parameter $\eta$ is sufficiently large, while its behavior becomes similar to the localized $\ell^2$-optimized DMD model
\begin{equation}
\label{AK_local}
\min_{\alpha \in \cA, \smallgap B \in \mathbb{C}^{R \times M}} \frac{1}{2} \left\| H - \Phi (\alpha) B \right\|_F^2
\end{equation}
if $\eta$ approaches to $0$.

\begin{proposition}
\label{Prop:gamma}
For $\eta > 0$, let $(\tH^{\eta}, \alpha^{\eta})$ denote a solution of the localized optimized DMD model~\cref{proposed_varpro_local} such that all the entries of $\alpha^{\eta}$ are distinct. Then we have the following:
\begin{enumerate}
\item Assume that $\{ (\tH^{\eta}, \alpha^{\eta} )\}_{\eta}$ has a limit point $(\tH^{\infty}, \alpha^{\infty})$ as $\eta \rightarrow \infty$ such that all the entries of $\alpha^{\infty}$ are distinct.
Then $(\alpha^{\infty}, \Phi (\alpha^{\infty})^{\dag} \tH^{\infty})$ solves~\cref{proposed_nonpenalty_local}.
\item Assume that $\{ (\tH^{\eta}, \alpha^{\eta} )\}_{\eta}$ has a limit point $(\tH^{0}, \alpha^{0})$ as $\eta \rightarrow 0$ such that all the entries of $\alpha^{0}$ are distinct.
Then $(\alpha^0, \Phi (\alpha^0)^{\dag} \tH^0 )$ solves~\cref{AK_local}.
\end{enumerate}
\end{proposition}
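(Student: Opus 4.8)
The plan is to prove this as the ``convergence of minimizers'' consequence of an epigraphical limit, but carried out by hand rather than through the full $\Gamma$-convergence formalism; note that for $\eta\to0$ the leading-order limit only forces $\tH^{0}=H$ and $\alpha^{0}$ is pinned down only at the next order, so a direct argument is cleaner anyway. Throughout I would write $P(\alpha)=\Phi(\alpha)\Phi(\alpha)^+$ for the orthogonal projector onto the column space of $\Phi(\alpha)$ and $F(\tH)=\sum_{H_{nm}\neq0}(\log|\tH_{nm}|+H_{nm}/\tH_{nm})$ (with the convention set in \cref{Rem:convention}), so that $\cE(\tH,\alpha)=F(\tH)+\frac{\eta}{2}\|\tH-P(\alpha)\tH\|_F^2$. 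I would first record three ingredients: (i) by \cref{Lem:log}, $F$ is bounded below on $\cS_H$ and attains its minimum there \emph{uniquely} at $\tH=H$, since each summand $x\mapsto\log|x|+H_{nm}/x$ is strictly decreasing then strictly increasing on $\{x\in\mathbb{R}:xH_{nm}>0\}$; (ii) $F$ is lower semicontinuous on $\cS_H$, using continuity on each such half-line together with the blow-up at the endpoint $0$ noted in \cref{Rem:convention}; and (iii) since the entries of $\alpha^{\infty}$ (resp.\ $\alpha^{0}$) are distinct, $\Phi$ has full column rank there, so $\alpha\mapsto\Phi(\alpha)^+$, hence $\alpha\mapsto P(\alpha)$, is continuous at that point. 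The only variational tool is then the minimality of $(\tH^{\eta},\alpha^{\eta})$ for $\cE$ over $\cS_H\times\cA$.

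For part~1, I would fix any feasible point of~\cref{proposed_nonpenalty}, say $\bar\tH:=\Phi(\bar\alpha)\bar B\in\cS_H$ with $F(\bar\tH)<\infty$ (if none exists, the infimum in~\cref{proposed_nonpenalty} is $+\infty$ and there is nothing to prove). Since $P(\bar\alpha)\bar\tH=\bar\tH$, testing minimality against $(\bar\tH,\bar\alpha)$ gives $F(\tH^{\eta})+\frac{\eta}{2}\|\tH^{\eta}-P(\alpha^{\eta})\tH^{\eta}\|_F^2\le F(\bar\tH)$, and combining with the lower bound on $F$ from (i) forces $\|\tH^{\eta}-P(\alpha^{\eta})\tH^{\eta}\|_F\to0$ as $\eta\to\infty$. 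Along the convergent subsequence, continuity of $P$ at $\alpha^{\infty}$ then yields $\tH^{\infty}=P(\alpha^{\infty})\tH^{\infty}=\Phi(\alpha^{\infty})B^{\infty}$ with $B^{\infty}:=\Phi(\alpha^{\infty})^+\tH^{\infty}$; feasibility of $(\alpha^{\infty},B^{\infty})$ for~\cref{proposed_nonpenalty} follows ($\tH^{\infty}\in\cS_H$ since $\cS_H$ is closed, and $F(\tH^{\infty})<\infty$ by the next line). For optimality, the same test against an arbitrary feasible $\bar\tH$ gives $F(\tH^{\eta})\le F(\bar\tH)$, and lower semicontinuity of $F$ upgrades this to $F(\Phi(\alpha^{\infty})B^{\infty})=F(\tH^{\infty})\le\liminf_{\eta\to\infty}F(\tH^{\eta})\le F(\bar\tH)$.

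For part~2, I would first show $\tH^{0}=H$: testing minimality against $(H,\alpha^{\eta})\in\cS_H\times\cA$ and using $\|H-P(\alpha^{\eta})H\|_F\le\|H\|_F$ gives $F(\tH^{\eta})\le F(H)+\frac{\eta}{2}\|H\|_F^2$, so letting $\eta\to0$ along the subsequence and using (ii) forces $F(\tH^{0})\le F(H)=\min_{\cS_H}F$, whence $\tH^{0}=H$ by the uniqueness in (i). Then, for an arbitrary $\bar\alpha\in\cA$, testing minimality against $(H,\bar\alpha)$ and using $F(\tH^{\eta})\ge F(H)$ to cancel the $F$-terms leaves $\|\tH^{\eta}-P(\alpha^{\eta})\tH^{\eta}\|_F^2\le\|H-P(\bar\alpha)H\|_F^2$ for \emph{every} $\eta>0$; taking $\eta\to0$ along the subsequence (using $\tH^{\eta}\to H$ and continuity of $P$ at $\alpha^{0}$) gives $\|H-P(\alpha^{0})H\|_F^2\le\|H-P(\bar\alpha)H\|_F^2$. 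As $\bar\alpha$ was arbitrary, $\alpha^{0}$ solves the variable-projected Askham--Kutz problem~\cref{AK_varpro}, so by \cite[Theorem~2.1]{GP:1973} (cf.~\cref{varpro}) the pair $(\alpha^{0},\Phi(\alpha^{0})^+H)=(\alpha^{0},\Phi(\alpha^{0})^+\tH^{0})$ solves~\cref{AK}.

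The step I expect to be the main obstacle is ingredient~(iii): the whole argument hinges on continuity of the variable-projection map $\alpha\mapsto\Phi(\alpha)\Phi(\alpha)^+$ at the limiting eigenvalue vector, which requires $\Phi$ to have locally constant (full) column rank there. Distinctness of the entries of $\alpha$ is the natural sufficient condition via the exponential/Vandermonde structure of $\Phi$, but --- as \cref{Ex:nonunique} shows --- it can fail for special time samples, so the argument really relies on $\Phi(\alpha^{\infty})$, resp.\ $\Phi(\alpha^{0})$, having full column rank $R$; this should be made explicit. A minor secondary point is the implicit nonvacuity assumption in part~1 that~\cref{proposed_nonpenalty} admits a feasible point of finite value.
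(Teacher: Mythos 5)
Your argument is correct, but it reaches the conclusion by a genuinely different and more elementary route than the paper. The paper works with the three-variable penalty formulation~\cref{proposed_penalty}: it invokes \cref{Prop:equiv} to pass from $(\tH^{\eta},\alpha^{\eta})$ to the triple $(\tH^{\eta},\alpha^{\eta},\Phi(\alpha^{\eta})^+\tH^{\eta})$, uses the perturbation theory of the pseudoinverse \cite[Theorem~2.1]{Wei:1990} to identify the limit of these triples, observes that the (suitably normalized) penalty energies increase pointwise to the constrained limit functionals as $\eta\to\infty$ (resp.\ $\eta\to 0$), and then concludes via epi-convergence \cite[Proposition~7.4(d) and Theorem~7.33]{RW:2009}. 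Your proof replaces all of this machinery with two direct comparison inequalities obtained by testing minimality of $(\tH^{\eta},\alpha^{\eta})$ against fixed competitors, together with the lower semicontinuity of the fidelity term $F$ and the fact (from \cref{Lem:log}) that $F$ is uniquely minimized over $\cS_H$ at $\tH=H$; in particular your treatment of the $\eta\to 0$ case, which first pins down $\tH^{0}=H$ and then cancels the $F$-terms to reduce to the variable-projected problem~\cref{AK_varpro} and hence to~\cref{AK} via \cite[Theorem~2.1]{GP:1973}, is self-contained and makes transparent exactly where each hypothesis is used. What the paper's approach buys is a ready-made framework that would extend to other penalty relaxations without redoing the estimates; what yours buys is the elimination of the external epi-convergence and pseudoinverse-perturbation results (you only need continuity of $\alpha\mapsto\Phi(\alpha)\Phi(\alpha)^+$ at the single limit point, where full column rank is an open condition). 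Your closing caveat is also well taken and applies equally to the paper's own proof: both arguments really require $\Phi(\alpha^{\infty})$ (resp.\ $\Phi(\alpha^{0})$) to have full column rank, and distinctness of the entries of $\alpha$ guarantees this only for generic time samples, as \cref{Ex:nonunique} illustrates; likewise, the optimality assertion for~\cref{proposed_nonpenalty} is, in both proofs, relative to competitors with $\Phi(\alpha)B\in\cS_H$, which is the natural feasible set inherited from the derivation in~\cref{MAP_proposed}.
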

\begin{proof}
Throughout this proof, the energy functional of~\cref{proposed_penalty_local} is denoted by $\cE_*^{\eta}(\tH, \alpha, B)$. By \cref{Prop:equiv}, for each $\eta > 0$, $(\tH^{\eta}, \alpha^{\eta}, \Phi (\alpha^{\eta})^{\dag} \tH^{\eta} )$ solves~\cref{proposed_penalty_local}, i.e., it is a global minimizer of $\cE_*^{\eta} (\tH, \alpha, B)$.
\begin{enumerate}
\item Since the entries of $\alpha^{\eta}$~($\eta > 0$) and $\alpha^{\infty}$ are distinct, the matrices $\Phi (\alpha^{\eta})$ and $\Phi (\alpha^{\infty})$ have full column rank. Invoking~\cite[Theorem~2.1]{Wei:1990}, we deduce that $(\tH^{\infty}, \alpha^{\infty}, \Phi (\alpha^{\infty})^{\dag} \tH^{\infty})$ is a limit point of  $\{ (\tH^{\eta}, \alpha^{\eta}, \Phi (\alpha^{\eta} )^{\dag} \tH^{\eta}) \}_{\eta}$ as $\eta$ tends to infinity. Meanwhile, we observe that $\cE_*^{\eta} (\tH, \alpha, B)$ increases pointwise to
\begin{equation*}
\cE_*^{\infty} (\tH, \alpha, B) = \sum_{H_{nm} \neq 0} \left( \log |\tH_{nm}| + \frac{H_{nm}}{\tH_{nm}} \right) + \chi_{\{ \tH = \Phi (\alpha) B \}} (\tH, \alpha, B)
\end{equation*}
as $\eta \rightarrow \infty$, where
\begin{equation*}
\chi_{\{ \tH = \Phi (\alpha) B \}} (\tH, \alpha, B) = \begin{cases}
0 & \textrm{ if } \tH  = \Phi (\alpha) B, \\
\infty & \textrm{ otherwise.}
\end{cases}
\end{equation*}
By~\cite[Proposition~7.4(d)]{RW:2009}, $\cE_*^{\eta} (\tH, \alpha, B)$ epi-converges to $\cE_*^{\infty} (\tH, \alpha, B)$ when $\eta \rightarrow \infty$. Hence, we conclude by~\cite[Theorem~7.33]{RW:2009} that the limit point $(\tH^{\infty}, \alpha^{\infty}, \Phi (\alpha^{\infty})^{\dag} \tH^{\infty})$ minimizes $\cE_*^{\infty} (\tH, \alpha, B)$. Equivalently, it solves~\cref{proposed_nonpenalty_local}.

\item Similar to~\cref{modified_energy}, we define
\begin{equation*}
\tcE_*^{\eta} ( \tH, \alpha, B) = \frac{1}{\eta} \sum_{H_{nm \neq 0}} \left( \log |\tH_{nm} | + \frac{H_{nm}}{\tH_{nm}} - \log |H_{nm}| - 1 \right) + \frac{1}{2} \| \tH -\Phi (\alpha) B \|_F^2.
\end{equation*}
Here, the $\sum_{H_{nm} \neq 0}$-term vanishes if and only if $\tH = H$. Invoking \cref{Prop:equiv} and~\cite[Theorem~2.1]{Wei:1990}, we deduce that $(\tH^{\eta}, \alpha^{\eta}, \Phi (\alpha^{\eta})^{\dag} \tH^{\eta})$ minimizes $\tcE_*^{\eta} (\tH, \alpha, B)$ and it accumulates at $(\tH^0, \alpha^0, \Phi (\alpha^{0})^{\dag} \tH^{0})$ when $\eta$ tends to $0$. Observe that as $\eta \rightarrow 0$, $\tcE_*^{\eta} (\tH, \alpha, B)$ increases pointwise to
\begin{equation*}
\tcE_*^0 (\tH, \alpha, B) = \frac{1}{2} \| \tH - \Phi (\alpha) B \|_F^2 + \chi_{\{ \tH = H \}} (\tH),
\end{equation*}
where 
\begin{equation*}
\chi_{\{ \tH = H \}} (\tH) = \begin{cases}
0 & \textrm{ if } \tH  = H, \\
\infty & \textrm{ otherwise.}
\end{cases}
\end{equation*}
Again by~\cite[Proposition~7.4(d) and Theorem~7.33]{RW:2009}, $\tcE_*^{\eta} (\tH, \alpha, B)$ epi-converges to $\tcE_*^0 (\tH, \alpha, B)$ when $\alpha \rightarrow 0$, which implies that $(\tH^0, \alpha^0, \Phi (\alpha^{0})^{\dag} \tH^{0})$ is a global minimizer of $\tcE_*^0 (\tH, \alpha, B)$. Therefore, $(\tH^0, \alpha^0, \Phi (\alpha^{0})^{\dag} \tH^{0})$ is a solution of~\cref{AK_local}.
\end{enumerate}
\end{proof}

\begin{remark}
\label{Rem:repeated}
In \cref{Prop:gamma}, we assumed that any $\alpha$ obtained as a solution of the proposed DMD model~\cref{proposed_varpro_local} has no repeated entries. Such an assumption is natural since any algorithms for the optimized DMD models such as~\cref{AK_local,proposed_varpro_local}  suffer from numerical instability near a solution if the matrix $\Phi (\alpha)$ is rank-deficient. In addition, it was mentioned in~\cite[Remark~2]{AK:2018} it is difficult to approximate dynamics using exponentials alone when a system matrix is not diagonalizable.
\end{remark}

\Cref{Prop:gamma} displays a favorable aspect of the proposed model.
By tuning the penalty parameter $\eta$ in~\cref{proposed_varpro_local} appropriately, the proposed model can not only approximate the MAP estimator~\cref{proposed_nonpenalty_local} well, but also it can lie between two limiting models~\cref{proposed_nonpenalty_local,AK_local} and inherit some advantages of them. For instance, the proposed DMD model is expected to be robust on not only the multiplicative noise but also the mixture of additive and multiplicative noise. Numerical results that verify the effectiveness of the proposed model on a realistic physical system corrupted by mixed additive and multiplicative noise will be presented in \cref{Subsec:Combustor}.

\section{Algorithm}
\label{Sec:Algorithm}
This section is devoted to an efficient numerical algorithm to solve the proposed optimized DMD model~\cref{proposed_varpro}. Since~\cref{proposed_varpro} has two sets of unknowns $\tH$ and $\alpha$, a natural idea to solve~\cref{proposed_varpro} is to adopt an alternating descent method~\cite{Beck:2015,BT:2013,BST:2014}. In addition, due to the nonconvexity of the energy functional $\cE (\tH, \alpha)$, a good initialization strategy should be considered in order to expect good performance~\cite{AK:2018,YL:2020}. 

\subsection{Alternating descent method}
To implement an alternating descent method for the proposed model~\cref{proposed_varpro}, we should be able to compute the $\tH$-gradient $\nabla_{\tH} \cE$ and $\alpha$-gradient $\nabla_{\alpha} \cE$ of the energy functional $\cE (\tH, \alpha)$. Note that $\tH$ is a real matrix while $\alpha$ is a complex vector. Since both real and complex variables appear in~\cref{proposed_varpro}, some careful considerations are required in the computation of gradients. We first state an elementary lemma on the differentiation of a norm of a complex matrix with respect to a real matrix.

\begin{lemma}
\label{Lem:real_grad}
For $A \in \mathbb{C}^{L \times N}$, we define the functional $\cJ \colon \mathbb{R}^{N \times M} \rightarrow \mathbb{R}$ by
\begin{equation*}
\cJ (K) = \frac{1}{2} \| AK \|_F^2.
\end{equation*}
Then we have
\begin{equation*}
\nabla_K \cJ (K) = \Re (A^{*} AK).
\end{equation*}
\end{lemma}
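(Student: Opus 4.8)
The plan is to treat $\cJ(K)$ as a real-valued function of the real matrix $K$, writing the complex matrix $A$ as $A = P + iQ$ with $P, Q \in \mathbb{R}^{L \times N}$, so that $AK = PK + iQK$ and hence $\|AK\|_F^2 = \|PK\|_F^2 + \|QK\|_F^2$, a genuine real quadratic form in $K$. First I would use the standard real-matrix identity $\nabla_K \tfrac12 \|MK\|_F^2 = M^{\rT} M K$ for a real matrix $M$, applied to $M = P$ and $M = Q$ separately, to obtain $\nabla_K \cJ(K) = P^{\rT} P K + Q^{\rT} Q K$. Then I would identify this expression with $\Re(A^{\rH} A K)$: expanding $A^{\rH} A = (P - iQ)^{\rT}(P+iQ) = (P^{\rT}P + Q^{\rT}Q) + i(P^{\rT}Q - Q^{\rT}P)$, and noting that $K$ is real, the real part of $A^{\rH} A K$ is exactly $(P^{\rT}P + Q^{\rT}Q)K$, which matches.

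Alternatively — and perhaps more transparently — I would start from $\cJ(K) = \tfrac12 \operatorname{tr}\!\big((AK)^{\rH}(AK)\big) = \tfrac12 \operatorname{tr}(K^{\rT} A^{\rH} A K)$, which is real because $\cJ$ is real-valued, so it equals its own real part: $\cJ(K) = \tfrac12 \operatorname{tr}\!\big(\Re(A^{\rH}A)\, K^{\rT} K\big)$ using that $K^{\rT}K$ is real. Since $\Re(A^{\rH}A)$ is a real symmetric matrix, the gradient of this real quadratic form in $K$ is $\Re(A^{\rH}A)\,K = \Re(A^{\rH} A K)$, the last equality again using that $K$ is real. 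Either route is a short computation; I would present whichever is cleaner.

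The only thing requiring any care — and the "main obstacle," though it is mild — is being scrupulous about the interface between complex and real differentiation: one must not blindly apply complex (Wirtinger) calculus here, since $K$ is constrained to be real, and one must justify taking the real part through the trace and through the product with $K$. The key facts making this work are that $\operatorname{tr}(K^{\rT} C K)$ depends only on the symmetric real part of $C$ when $K$ is real, and that $\Re(A^{\rH}A K) = \Re(A^{\rH}A)\,K$ precisely because $K$ has no imaginary part. I would state these one-line observations explicitly so the gradient formula is unambiguous, and then the result follows.
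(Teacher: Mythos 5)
Your first route is exactly the paper's proof: decompose $A = U + iV$ with real $U,V$, note $\|AK\|_F^2 = \|UK\|_F^2 + \|VK\|_F^2$, differentiate to get $(U^{\rT}U + V^{\rT}V)K$, and identify this with $\Re(A^{\rH}AK)$ using that $K$ is real. The proposal is correct and essentially identical to the paper's argument (your trace-based alternative is also sound, modulo writing $KK^{\rT}$ rather than $K^{\rT}K$ after the cyclic permutation).
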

\begin{proof}
Let $U = \Re (A) \in \mathbb{R}^{L \times N}$ and $V = \Im (A) \in \mathbb{R}^{L \times N}$, i.e., $A = U + iV$. Then we have
\begin{equation*}
A^{*} A K = (U^{\rT} - iV^{\rT})(U + iV) K = (U^{\rT} U + V^{\rT} V) K + i(U^{\rT} V - V^{\rT} U) K,
\end{equation*}
so that $\Re (A^{*} A K) = (U^{\rT} U + V^{\rT} V) K$. Meanwhile, since
\begin{equation*}
\| A K \|_F^2 = \| UK + iVK \|_F^2 = \| UK \|_F^2 + \| VK \|_F^2, 
\end{equation*}
it follows that
\begin{equation*}
\nabla_K \cJ (K) = \frac{1}{2} \nabla_K \left( \| UK \|_F^2 + \| VK \|_F^2 \right) = (U^{\rT} U + V^{\rT} V)K.
\end{equation*}
Therefore, we conclude that $\nabla_K \cJ (K) = \Re (A^{*} A K)$.
\end{proof}

When we differentiate the energy functional $\cE (\tH, \alpha)$ with respect to $\tH$, the $\sum_{H_{nm \neq 0}}$-term can be handled entrywise by the single variable calculus; we have
\begin{equation*}
\nabla_{\tH} \left(\sum_{H_{nm} \neq 0} \left( \log |\tH_{nm}| + \frac{H_{nm}}{\tH_{nm}} \right) \right) = D (\tH),
\end{equation*}
where the matrix $D(\tH) \in \mathbb{R}^{N \times M}$ is given by
\begin{equation*}
D (\tH)_{nm} = \begin{cases} \displaystyle
\frac{1}{\tH_{nm}} - \frac{H_{nm}}{\tH_{nm}^2} & \textrm{ if } H_{nm} \neq 0, \\
0 & \textrm{ otherwise.}
\end{cases}
\end{equation*}
On the other hand, invoking \cref{Lem:real_grad}, the differentiation of $\| \tH - \Phi (\alpha) \Phi (\alpha)^{\dag} \tH \|_F^2$ is straightforward:
\begin{equation*}
\frac{1}{2} \nabla_{\tH} \left( \| \tH - \Phi (\alpha) \Phi (\alpha)^{\dag} \tH \|_F^2 \right)
= (I - \Phi (\alpha) \Phi(\alpha)^{\dag})^{*} (I - \Phi (\alpha) \Phi(\alpha)^{\dag}) \tH = (I - \Phi (\alpha) \Phi(\alpha)^{\dag}) \tH,
\end{equation*}
where the last equality is due to the fact that $I - \Phi (\alpha) \Phi(\alpha)^{\dag}$ is an orthogonal projection~\cite{AK:2018}. In summary, we get the following formula for $\nabla_{\tH} \cE$:
\begin{equation}
\label{grad_tH}
\nabla_{\tH} \cE (\tH, \alpha) = D(\tH) + \eta \Re ( \tH - \Phi (\alpha) \Phi (\alpha)^{\dag} \tH ).
\end{equation}
Note that $\cE (\tH, \alpha)$ is not differentiable if $\tH_{nm} = 0$ for some $n$ and $m$ such that $H_{nm} \neq 0$. We define the subset $\cS_H^{\circ}$ of $\cS_H$ as
\begin{equation*}
\cS_{H}^{\circ} = \left\{ K \in \mathbb{R}^{N \times M} : K_{nm} H_{nm} > 0 \textrm{ if } H_{nm} \neq 0, \gap
K_{nm} = 0 \textrm{ if } H_{nm} = 0 \right\},
\end{equation*}
so that $\cE (\tH, \alpha)$ is differentiable at every element in $\cS_H^{\circ}$. \Cref{Rem:convention} implies that $\cE (\tH, \alpha) = \infty$ in $\cS_H \setminus \cS_H^{\circ}$.

Next, we focus on the differentiation of $\cE (\tH, \alpha)$ with respect to $\alpha$. Because $\cE (\tH, \alpha)$ is nonconstant real-valued, it is not complex differentiable in the standard sense. That is, it is impossible to design gradient-based optimization algorithms for this kind of problems with the standard complex calculus. We notice that, in several existing works~\cite{AAAM:2011,LA:2008}, the $\mathbb{CR}$-calculus~\cite{Kreutz:2009} has been successfully adopted to design gradient-based algorithms instead of the standard complex calculus. Similar to~\cite{Kreutz:2009}, we define the complex gradient $\nabla_{\alpha} \cE (\tH, \alpha)$ by
\begin{equation}
\label{complex_gradient}
\nabla_{\alpha} \cE (\tH, \alpha) = 2 \left( \frac{\partial \cE}{\partial \alpha} (\tH, \alpha ) \right)^{*},
\end{equation}
where $\partial \cE / \partial \alpha$ denotes the Wirtinger derivative of $\cE $ with respect to $\alpha$, i.e., the derivative with respect to $\alpha$ with $\bar{\alpha}$ held constant. It was shown in~\cite[Proposition~2]{LA:2008} that the definition~\cref{complex_gradient} of the complex gradient agrees with that of the real gradient. By the chain rule for the Wirtinger derivatives~\cite{Kreutz:2009}, we have
\begin{equation*}
\frac{\partial \cE}{\partial \alpha} (\tH, \alpha ) = \frac{\eta}{2} ( \tH - \Phi (\alpha) \Phi (\alpha)^{\dag} \tH)^{*} J (\tH, \alpha),
\end{equation*}
where $J (\tH, \alpha)$ is the Jacobian matrix of $\tH - \Phi (\alpha) \Phi (\alpha)^{\dag} \tH$ with respect to $\alpha$. Hence, we get a formula for $\nabla_{\alpha} \cE (\tH, \alpha)$ as follows:
\begin{equation}
\label{grad_alpha}
\nabla_{\alpha} \cE (\tH, \alpha) = \eta J (\tH, \alpha)^{*} (\tH - \Phi (\alpha) \Phi (\alpha)^{\dag} \tH).
\end{equation}
One may refer to~\cite[section~2.2]{AK:2018} for a detailed explanation on how to compute $J (\tH, \alpha)$.

Now, we are ready to propose an alternating descent method for the proposed DMD model~\cref{proposed_varpro}; see \cref{Alg:proposed}.

\begin{algorithm}
\caption{Alternating descent method for the proposed model~\cref{proposed_varpro}}
\begin{algorithmic}[]
\label{Alg:proposed}
\STATE Choose $\tH^{(0)} \in \cS_{H}^{\circ}$, $\alpha^{(0)} \in \mathbb{C}^R$, $\tau_{\tH} > 0$, and $\tau_{\alpha} > 0$.

\FOR{$k=0,1,2,\dots$}
\STATE \textbf{(Projected gradient descent with respect to $\tH$)}
\STATE $$\tau_{\tH} \leftarrow 2\tau_{\tH}$$
\REPEAT
\STATE $$\tH^{(k+1)} = \proj_{\cS_H} \left( \tH^{(k)} - \tau_{\tH} \nabla_{\tH} \cE (\tH^{(k)}, \alpha^{(k)}) \right)
\quad \textrm{(see~\cref{grad_tH})}$$
\IF {$\displaystyle \cE (\tH^{(k+1)}, \alpha^{(k)}) > \cE (\tH^{(k)}, \alpha^{(k)}) - \frac{1}{2 \tau_{\tH}} \| \tH^{(k+1)} - \tH^{(k)} \|_F^2$ }
\STATE $$\tau_{\tH} \leftarrow \tau_{\tH} / 2$$
\ENDIF
\UNTIL {$\displaystyle \cE (\tH^{(k+1)}, \alpha^{(k)}) \leq \cE (\tH^{(k)}, \alpha^{(k)}) - \frac{1}{2 \tau_{\tH}} \| \tH^{(k+1)} - \tH^{(k)} \|_F^2$}

\STATE \textbf{(Gradient descent with respect to $\alpha$)}
\STATE $$\tau_{\alpha} \leftarrow 2\tau_{\alpha}$$
\REPEAT
\STATE $$\alpha^{(k+1)} = \alpha^{(k)} - \tau_{\alpha} \nabla_{\alpha} \cE (\tH^{(k+1)}, \alpha^{(k)})
\quad \textrm{(see~\cref{grad_alpha})}$$
\IF {$\displaystyle \cE (\tH^{(k+1)}, \alpha^{(k+1)}) > \cE (\tH^{(k+1)}, \alpha^{(k)}) - \frac{1}{2\tau_{\alpha} } \| \alpha^{(k+1)} - \alpha^{(k)} \|_2^2$}
\STATE $$\tau_{\alpha} \leftarrow \tau_{\alpha} / 2$$
\ENDIF
\UNTIL {$\displaystyle \cE (\tH^{(k+1)}, \alpha^{(k+1)}) \leq \cE (\tH^{(k+1)}, \alpha^{(k)}) - \frac{1}{2\tau_{\alpha} } \| \alpha^{(k+1)} - \alpha^{(k)} \|_2^2$}
\ENDFOR
\end{algorithmic}
\end{algorithm}

\Cref{Alg:proposed} is composed of gradient descent steps with respect to $\tH$ and $\alpha$ described in~\cref{grad_tH} and~\cref{grad_alpha}, respectively. To ensure that $\tH^{(k)}$, $k \geq 0$, always belongs to $\cS_H$, we employ the projected gradient descent for $\tH$. Moreover, an intial guess $\tH^{(0)}$ for $\tH$ is chosen such that $\tH^{(0)} \in \cS_H^{\circ}$ since the energy functional is not differentiable with respect to $\tH$ on $\cS_H \setminus \cS_H^{\circ}$. It is easy to verify that $\tH^{(k)}$ remains in $\cS_H^{\circ}$ for all $k \geq 0$ if $\tH^{(0)} \in \cS_H^{\circ}$. Meanwhile, \cref{Alg:proposed} has backtracking steps to determine suitable step sizes $\tau_{\tH}$ and $\tau_{\alpha}$ in each iteration. We note that there have been a number of existing works on applications of backtracking strategies for mathematical optimization; see, e.g.,~\cite{BT:2013,CC:2019,Park:2022}. In particular, the backtracking strategy used in \cref{Alg:proposed} is motivated by the block coordinate descent method with backtracking proposed in~\cite{BT:2013}.

We make a brief discussion on the computational cost of each iteration of \cref{Alg:proposed}. The projection $\proj_{\cS_H}$ onto the set $\cS_H$ can be done cheaply by using the following formula:
\begin{equation*}
\left( \proj_{\cS_H} (K) \right)_{nm} = \begin{cases}
\max \{ K_{nm}, 0 \} & \textrm{ if } H_{nm} > 0, \\
0 & \textrm{ if } H_{nm} = 0, \\
\min \{ K_{nm}, 0 \} & \textrm{ if } H_{nm} < 0.
\end{cases}
\end{equation*}
Computation of the gradients $\nabla_{\tH} \cE (\tH^{(k)}, \alpha^{(k)})$ and $\nabla_{\alpha} \cE (\tH^{(k+1)}, \alpha^{(k)})$ is required only once at the $k$th iteration of \cref{Alg:proposed}. Once the gradients are evaluated, the gradients can be stored in memory and used in the backtracking steps without additional computation. Similarly, in the gradient descent steps for $\tH$ and $\alpha$, it is enough to evaluate the energy values $\cE (\tH^{(k)}, \alpha^{(k)})$ and $\cE (\tH^{(k+1)}, \alpha^{(k)})$ only once, respectively. The computational cost of backtracking steps is marginal. At each inner iteration in the backtracking step for searching the step size $\tau_{\tH}$, the required computations are a single evaluation of the energy value $\cE (\tH^{(k+1)}, \alpha^{(k)})$ and some minor scalar operations; the values of $\cE (\tH^{(k)}, \alpha^{(k)})$ and $\| \nabla_{\tH} \cE (\tH^{(k)}, \alpha^{(k)}) \|_F$ are computed before the backtracking step and stored as explained above. A similar explanation can be made for each iteration in the backtracking step for $\tau_{\alpha}$.

\subsection{Convergence analysis}
In order to ensure the robustness of the proposed algorithm, it is crucial to observe the asymptotic behavior of \cref{Alg:proposed} as the iteration count $k$ tends to infinity. First, we present an elementary fact on functions with locally Lipschitz gradients; although \cref{Lem:Lipschitz} shows the result for functions with complex domains, the same result holds for functions with real domains~\cite[Lemma~2.3]{BT:2009}.

\begin{lemma}
\label{Lem:Lipschitz}
Let $\cS$ be a closed convex subset of $\mathbb{C}^N$. Assume that a function $g \colon \mathbb{C}^N \rightarrow \mathbb{R}$ has the locally Lipschitz complex gradient. That is, for any $x \in \mathbb{C}^N$, there exists a neighborhood $\cN_x \subset \mathbb{C}^N$ of $x$ and a constant $L_{x} > 0$ such that
\begin{equation}
\label{Lipschitz}
\| \nabla g(y) - \nabla g(x) \|_2 \leq L_x \| y - x \|_2, \quad y \in \cN_x,
\end{equation}
where $\nabla g$ is the complex gradient of $g$ defined as~\cref{complex_gradient}. Then, for any $x \in \cS$ and $\tau \in (0, 1/L_x]$ satisfying $z = \proj_{\cS} \left( x - \tau \nabla g(x) \right) \in \cN_x$, we have
\begin{equation*}
g(z ) \leq g(x) - \frac{1}{2\tau} \| z - x \|_2^2.
\end{equation*}
\end{lemma}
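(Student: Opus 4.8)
The plan is to run the classical ``sufficient decrease'' (descent lemma) argument for a single projected gradient step; the only delicate points are that the Lipschitz hypothesis~\cref{Lipschitz} is purely local, and that $g$ is real-valued on a complex domain, so $\nabla g$ must be read in the $\mathbb{CR}$-calculus sense of~\cref{complex_gradient}.

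First I would dispose of the complex-variable issue by identifying $\mathbb{C}^N$ with $\mathbb{R}^{2N}$ through $x \mapsto (\Re x, \Im x)$. Under this identification the $\ell^2$ norm is preserved, $\cS$ becomes a closed convex subset of $\mathbb{R}^{2N}$ with the same metric projection, $g$ is a $C^1$ function on $\mathbb{R}^{2N}$, and --- the essential point --- by~\cite[Proposition~2]{LA:2008} the complex gradient~\cref{complex_gradient} coincides with the ordinary Euclidean gradient of $g$ on $\mathbb{R}^{2N}$, so~\cref{Lipschitz} is just the usual local Lipschitz condition. It therefore suffices to prove the statement for a real $C^1$ function with locally Lipschitz gradient, which is~\cite[Lemma~2.3]{BT:2009}; for completeness I would reproduce its short argument.

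For the real statement the steps, in order, are: (1) shrink $\cN_x$ to an open ball centered at $x$ --- the hypothesis only supplies \emph{some} neighborhood, and a ball both retains~\cref{Lipschitz} and ensures the segment $\{x + t(z-x) : t \in [0,1]\}$ stays in $\cN_x$ whenever $z \in \cN_x$; (2) integrate $\nabla g$ along that segment and use~\cref{Lipschitz} about the fixed center $x$ to obtain $g(z) \le g(x) + \langle \nabla g(x), z - x \rangle + \frac{L_x}{2}\|z-x\|_2^2$; (3) apply the variational inequality characterizing $z = \proj_{\cS}(x - \tau \nabla g(x))$, tested against the admissible point $x \in \cS$, to get $\langle \nabla g(x), z - x \rangle \le -\frac{1}{\tau}\|z-x\|_2^2$; (4) add (2) and (3) and use $\tau \le 1/L_x$, which collapses the two quadratic terms into $-\frac{1}{2\tau}\|z-x\|_2^2$, giving the claim.

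The hard part is not computational --- each step is routine --- but conceptual: one must be sure the $\mathbb{CR}$-gradient of~\cref{complex_gradient} is genuinely the steepest-ascent direction of $g$ with respect to the real inner product on $\mathbb{R}^{2N}$, since both the projection step and the inner-product manipulations in (2) and (3) rely on that real Euclidean structure; this is exactly what~\cite[Proposition~2]{LA:2008} provides and what makes the reduction to~\cite[Lemma~2.3]{BT:2009} legitimate. A minor secondary point is the localization: because~\cref{Lipschitz} is stated only relative to the base point $x$, the descent bound must be obtained from the integral (mean-value) form rather than a global Lipschitz gradient, and the convexity of the shrunk neighborhood is precisely what lets that integral run along the full segment from $x$ to $z$.
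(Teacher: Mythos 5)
Your proposal is correct and follows essentially the same route as the paper's proof: a descent-lemma bound obtained by integrating the gradient along the segment from $x$ to $z$, combined with the variational inequality characterizing $\proj_{\cS}$ and the step-size condition $\tau \le 1/L_x$. The only difference is presentational --- you reduce to the real case on $\mathbb{R}^{2N}$ via \cite[Proposition~2]{LA:2008} and \cite[Lemma~2.3]{BT:2009}, whereas the paper carries out the same integration directly with Wirtinger derivatives, writing the inner products as $\Re\left( \nabla g(\cdot)^{\rH} (\cdot) \right)$; your explicit remark that the neighborhood should be taken star-shaped about $x$ so the segment stays in $\cN_x$ is a point the paper leaves implicit.
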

\begin{proof}
For a fixed $x \in \mathbb{C}^N$, we take $\cN_x \subset \mathbb{C}^N$ and $L_x > 0$ as given in~\cref{Lipschitz}. Take any $y \in \cN_x$. If we define a function $G \colon [0,1] \subset \mathbb{R} \rightarrow \mathbb{R}$ by
\begin{equation*}
G(t) = g((1-t)x + ty), \quad t \in [0, 1],
\end{equation*}
and set $z(t) = (1-t)x + ty$, then we get
\begin{equation*} \begin{split}
G'(t) &= \frac{\partial g}{\partial z} z'(t) + \frac{\partial g}{\partial \bar{z}} \bar{z}' (t) \\
&=2 \Re \left( \frac{\partial g}{\partial z} (y-x) \right) = \Re \left( \nabla g (z(t))^{*} (y-x) \right).
\end{split} \end{equation*}
It follows that
\begin{equation*} \begin{split}
g(y) - g(x) &= \int_0^1 G'(t) \,dt \\
&= \Re \left( \int_0^1 \left( \nabla g( (1-t)x + ty) - \nabla g(x) \right)^{*} (y-x) \,dt \right) + \Re \left( \nabla g(x)^{*} (y-x) \right) \\
&\leq \int_0^1 L_x t  \|y-x\|_2^2 \,dt + \Re \left( \nabla g(x)^{*} (y-x) \right) \\
&= \frac{L_x}{2} \|y-x\|_2^2 + \Re \left( \nabla g(x)^{*} (y-x) \right).
\end{split} \end{equation*}
That is, we obtain
\begin{equation}
\label{descent_lemma}
g(y) \leq g(x) + \Re \left( \nabla g(x)^{*} (y-x) \right) + \frac{L_x}{2} \|y - x\|_2^2.
\end{equation}

Now, we assume that $x \in \cS$ and that $z = \proj_{\cS} \left( x- \tau \nabla g(x) \right) \in \cN_x$. The elementary property of the projection presented in~\cite[Theorem~5.2]{Brezis:2010} implies that
\begin{equation*}
\Re \left( \left( z - (x - \tau \nabla g(x)) \right)^{*} (z-x) \right) \leq 0.
\end{equation*}
We substitute $y$ in~\cref{descent_lemma} by $z$.
Then we get
\begin{equation*} \begin{split}
g(z) &\leq g(x) + \Re \left( \nabla g(x)^{*} (z-x) \right) + \frac{L_x}{2} \|z - x \|_2^2 \\
&\leq  g(x) + \Re \left( \nabla g(x)^{*} (z-x) \right) + \frac{L_x}{2} \|z - x \|_2^2 - \frac{1}{\tau} \Re \left( \left( z - (x - \tau \nabla g(x)) \right)^{*} (z-x) \right) \\
&= g(x) + \left( \frac{L_x}{2} - \frac{1}{\tau} \right) \| z - x \|_2^2 \\
&\leq g(x) - \frac{1}{2\tau} \| z - x \|_2^2,
\end{split} \end{equation*}
which completes the proof.
\end{proof}

Using \cref{Lem:Lipschitz}, we can prove that the backtracking processes for the step sizes $\tau_{\tH}$ and $\tau_{\alpha}$ in \cref{Alg:proposed} are always finite.

\begin{proposition}
\label{Prop:backtracking}
In \cref{Alg:proposed}, assume that the entries of each $\alpha^{(k)}$, $k \geq 0$, are distinct. Then the backtracking processes for the step sizes $\tau_{\tH}$ and $\tau_{\alpha}$ terminate in finite steps.
\end{proposition}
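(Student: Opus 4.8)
The plan is to show that each backtracking loop in \cref{Alg:proposed} reduces the step size only finitely many times, which — combined with the doubling step at the start of each loop — guarantees termination. The key is \cref{Lem:Lipschitz}: once the step size drops below a suitable reciprocal-Lipschitz threshold, the sufficient-decrease inequality is automatically satisfied, so the loop exits. Thus it suffices to exhibit, at each outer iteration $k$, a positive constant that lower-bounds the step sizes actually used.

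First I would handle the $\tH$-loop. I would fix $\alpha = \alpha^{(k)}$ and regard $g_1(\cdot) = \cE(\cdot, \alpha^{(k)})$ as a function on $\cS_H^{\circ}$. From the gradient formula~\cref{grad_tH}, $\nabla_{\tH}\cE$ is the sum of the entrywise term $D(\tH)$ — whose entries are rational functions $1/\tH_{nm} - H_{nm}/\tH_{nm}^2$, smooth wherever $\tH_{nm}\neq 0$ — and the linear term $\eta(I - \Phi(\alpha^{(k)})\Phi(\alpha^{(k)})^+)\tH$. Hence $\nabla_{\tH}\cE(\cdot,\alpha^{(k)})$ is locally Lipschitz on $\cS_H^{\circ}$; more precisely, on any neighborhood of $\tH^{(k)}$ that stays uniformly bounded away from the ``forbidden'' set $\{\tH_{nm} = 0 : H_{nm}\neq 0\}$, the Lipschitz constant is finite. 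Since $\tH^{(k)} \in \cS_H^{\circ}$ (which is preserved by the iteration, as noted after \cref{Alg:proposed}), such a neighborhood $\cN_{\tH^{(k)}}$ and constant $L_{\tH^{(k)}}$ exist. Applying \cref{Lem:Lipschitz} with $\cS = \cS_H$, any $\tau_{\tH} \in (0, 1/L_{\tH^{(k)}}]$ for which the projected point lands in $\cN_{\tH^{(k)}}$ forces the exit condition $\cE(\tH^{(k+1)},\alpha^{(k)}) \le \cE(\tH^{(k)},\alpha^{(k)}) - \frac{1}{2\tau_{\tH}}\|\tH^{(k+1)}-\tH^{(k)}\|_F^2$; since the backtracking halves $\tau_{\tH}$, after finitely many halvings $\tau_{\tH}$ is below this threshold (and the projected point is close enough to $\tH^{(k)}$ to lie in $\cN_{\tH^{(k)}}$, because $\|\tH^{(k+1)}-\tH^{(k)}\|_F \le \tau_{\tH}\|\nabla_{\tH}\cE(\tH^{(k)},\alpha^{(k)})\|_F \to 0$).

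Next I would treat the $\alpha$-loop analogously, now with $\tH = \tH^{(k+1)}$ fixed and $g_2(\cdot) = \cE(\tH^{(k+1)},\cdot)$ a real-valued function of the complex variable $\alpha$. Here the $\sum_{H_{nm}\neq 0}$-term does not depend on $\alpha$, so the $\alpha$-gradient~\cref{grad_alpha} involves only $\eta J(\tH^{(k+1)},\alpha)^{\rH}(\tH^{(k+1)} - \Phi(\alpha)\Phi(\alpha)^+\tH^{(k+1)})$. The entries of $\Phi(\alpha)$ are entire functions $e^{\alpha_r t_n}$ of $\alpha$, and the pseudoinverse $\Phi(\alpha)^+$ is smooth in $\alpha$ wherever $\Phi(\alpha)$ has full column rank — which holds in a neighborhood of $\alpha^{(k)}$ precisely because the entries of $\alpha^{(k)}$ are assumed distinct, so the columns $e^{\alpha^{(k)}_r t_\bullet}$ are linearly independent (a Vandermonde-type argument). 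Consequently $J(\tH^{(k+1)},\alpha)$ and hence $\nabla_{\alpha}\cE(\tH^{(k+1)},\cdot)$ is locally Lipschitz near $\alpha^{(k)}$, giving a finite $L_{\alpha^{(k)}}$ on a suitable neighborhood $\cN_{\alpha^{(k)}}$. Invoking \cref{Lem:Lipschitz} with $\cS = \mathbb{C}^R$ (so $\proj_{\cS}$ is the identity, matching the unconstrained $\alpha$-update since $\cA = \mathbb{C}^R$), the same argument shows the $\alpha$-backtracking halts once $\tau_{\alpha}$ falls below $1/L_{\alpha^{(k)}}$ and $\alpha^{(k+1)}$ lands in $\cN_{\alpha^{(k)}}$.

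The main obstacle is not the abstract descent argument — that is just \cref{Lem:Lipschitz} — but verifying \emph{local} Lipschitz continuity of the two gradients in the two potentially singular regimes: $\tH_{nm}$ approaching $0$ in the first loop, and $\Phi(\alpha)$ becoming rank-deficient in the second. Both are controlled by hypotheses already in place: $\tH^{(k)} \in \cS_H^{\circ}$ keeps us away from the zero set of the rational nonlinearity, and the distinctness assumption on $\alpha^{(k)}$ keeps $\Phi(\alpha)$ full-rank near $\alpha^{(k)}$ so that $\alpha \mapsto \Phi(\alpha)^+$ is $C^1$ there (cf. the smoothness of the pseudoinverse under constant rank). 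One subtlety worth a sentence in the write-up is that \cref{Lem:Lipschitz} requires the candidate point to lie in the neighborhood $\cN_x$; this is ensured because the step-length bound makes the increments $\|\tH^{(k+1)}-\tH^{(k)}\|_F$ and $\|\alpha^{(k+1)}-\alpha^{(k)}\|_2$ shrink proportionally to $\tau_{\tH}$ and $\tau_{\alpha}$, so a few extra halvings put the iterate inside $\cN_x$.
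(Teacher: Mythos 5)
Your proposal is correct and follows essentially the same route as the paper's proof: establish local Lipschitz continuity of $\nabla_{\tH}\cE$ and $\nabla_{\alpha}\cE$ (the former via smoothness of the rational term on $\cS_H^{\circ}$, the latter via smoothness of $\Phi(\alpha)^+$ where $\Phi(\alpha)$ has full column rank, guaranteed by the distinctness assumption), then invoke \cref{Lem:Lipschitz} to show the sufficient-decrease test must pass once the halved step size drops below the reciprocal local Lipschitz constant and the candidate point lies in the relevant neighborhood. Your extra remark that the increments shrink proportionally to the step size, so finitely many halvings also force the candidate into $\cN_x$, is a detail the paper leaves implicit and is a welcome addition.
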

\begin{proof}
Observing that $\Phi (\alpha)$ is infinitely many differentiable with respect to $\alpha$ and that each $\Phi (\alpha^{(k)})$ has full column rank, one can verify that $\nabla_{\tH} \cE (\tH, \alpha)$ and $\nabla_{\alpha} \cE (\tH, \alpha)$ given in~\cref{grad_tH} and~\cref{grad_alpha} are continuously differentiable with respect to $\tH$ and $\alpha$ in $\cS_H^{\circ} \times \mathbb{C}^R$, respectively~(see~\cite{GP:1973} for the differentiability of $\Phi (\alpha)^{\dag}$). Hence, they are locally Lipschitz. Invoking \cref{Lem:Lipschitz}, we conclude that at the $k$th iteration of \cref{Alg:proposed}, the backtracking process for $\tau_{\tH}$ terminates when $\tau_{\tH}$ becomes sufficiently small so that
\begin{equation*}
\tau_{\tH} \leq 1/ L_{\tH^{(k)}} \textrm{ and } \proj_{\cS_H} \left( \tH^{(k)} - \tau_{\tH} \nabla_{\tH} \cE (\tH^{(k)}, \alpha^{(k)}) \right) \in \cN_{\tH^{(k)}},
\end{equation*} where $L_{\tH^{(k)}}$ is a local Lipschitz constant of $\nabla_{\tH} \cE ( \cdot, \alpha^{(k)})$ in a neighborhood $\cN_{\tH^{(k)}}$ of $\tH^{(k)}$ defined in~\cref{Lipschitz}.
Similarly, the backtracking process for $\tau_{\alpha}$ terminates when
\begin{equation*}
\tau_{\alpha} \leq 1/ L_{\alpha^{(k)}} \textrm{  and } \alpha^{(k)} - \tau_{\alpha} \nabla_{\alpha} \cE (\tH^{(k+1)},\alpha^{(k)}) \in \cN_{\alpha^{(k)}},
\end{equation*}
where $L_{\alpha^{(k)}}$ is a local Lipschitz constant of $\nabla_{\alpha} \cE (\tH^{(k+1)}, \cdot)$ in a neighborhood $\cN_{\alpha^{(k)}}$ of $\alpha^{(k)}$.
\end{proof}

Finally, we present a monotone convergence property of \cref{Alg:proposed} in \cref{Prop:convergence}; it is guaranteed that the energy values corresponding to the sequence $\{ (\tH^{(k)}, \alpha^{(k)}) \}_k$ generated by \cref{Alg:proposed} always decreases when $k$ grows up to infinity.

\begin{proposition}
\label{Prop:convergence}
In \cref{Alg:proposed}, assume that the entries of each $\alpha^{(k)}$, $k \geq 0$, are distinct.
Then the sequence $\{ \cE (\tH^{(k)}, \alpha^{(k)}) \}_k$ of the energy values is decreasing. Consequently, it is convergent when $k$ tends to infinity.
\end{proposition}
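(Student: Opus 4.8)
The plan is to read off two one-step descent inequalities directly from the two backtracking loops of \cref{Alg:proposed}, chain them to get monotonicity, and then close by the monotone convergence theorem using the lower bound on $\cE$ provided by \cref{Lem:log}.

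First I would invoke \cref{Prop:backtracking} (carrying over its standing hypothesis that the entries of each $\alpha^{(k)}$ are distinct) to guarantee that at every iteration $k$ both backtracking loops terminate after finitely many halvings, so that $\tH^{(k+1)}$, $\alpha^{(k+1)}$ and the step sizes $\tau_{\tH}$, $\tau_{\alpha}$ are well defined. I would also recall, as noted right after \cref{Alg:proposed}, that $\tH^{(k)} \in \cS_H^{\circ}$ for all $k$ whenever $\tH^{(0)} \in \cS_H^{\circ}$, so that $\cE (\tH^{(k)}, \alpha^{(k)})$ is finite along the iterates; if a trial step ever landed outside $\cS_H^{\circ}$, then $\cE$ would be $\infty$ there by \cref{Rem:convention} and the backtracking test would simply reject it.

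Next, the exit condition of the \textbf{repeat}--\textbf{until} block for $\tH$ is, by construction,
\begin{equation*}
\cE (\tH^{(k+1)}, \alpha^{(k)}) \leq \cE (\tH^{(k)}, \alpha^{(k)}) - \frac{1}{2\tau_{\tH}} \| \tH^{(k+1)} - \tH^{(k)} \|_F^2 \leq \cE (\tH^{(k)}, \alpha^{(k)}),
\end{equation*}
since $\tau_{\tH} > 0$ and the Frobenius-norm term is nonnegative. Likewise, the exit condition of the block for $\alpha$ gives
\begin{equation*}
\cE (\tH^{(k+1)}, \alpha^{(k+1)}) \leq \cE (\tH^{(k+1)}, \alpha^{(k)}) - \frac{1}{2\tau_{\alpha}} \| \alpha^{(k+1)} - \alpha^{(k)} \|_2^2 \leq \cE (\tH^{(k+1)}, \alpha^{(k)}).
\end{equation*}
Combining the two displays yields $\cE (\tH^{(k+1)}, \alpha^{(k+1)}) \leq \cE (\tH^{(k)}, \alpha^{(k)})$ for every $k$, i.e., $\{ \cE (\tH^{(k)}, \alpha^{(k)}) \}_k$ is nonincreasing.

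Finally, I would note that $\cE$ is bounded below: by \cref{Lem:log}, for each $(n,m)$ with $H_{nm} \neq 0$ one has $\log |\tH_{nm}| + H_{nm}/\tH_{nm} \geq \log |H_{nm}| + 1$, and the penalty term $\frac{\eta}{2}\| \tH - \Phi(\alpha)\Phi(\alpha)^+ \tH \|_F^2$ is nonnegative, so $\cE (\tH, \alpha) \geq \sum_{H_{nm} \neq 0} (\log |H_{nm}| + 1) > -\infty$ for all $(\tH, \alpha) \in \cS_H^{\circ} \times \cA$. A nonincreasing sequence of real numbers bounded below converges, which proves the claim. I do not anticipate a real obstacle here; the only points needing a little care are the bookkeeping that the iterates remain in $\cS_H^{\circ}$ (so the energy stays finite) and the explicit appeal to \cref{Prop:backtracking} to know the loops terminate, both of which are handled above.
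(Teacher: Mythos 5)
Your proposal is correct and follows essentially the same route as the paper's proof: both extract the two descent inequalities from the exit conditions of the backtracking loops (whose finite termination is guaranteed by \cref{Prop:backtracking}), sum them to obtain monotonicity, and conclude convergence from the lower bound on $\cE$ supplied by \cref{Lem:log}. The extra bookkeeping you include about the iterates staying in $\cS_H^{\circ}$ is a harmless (and slightly more careful) addition, not a departure in method.
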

\begin{proof}
Take any $k \geq 0$. Thanks to \cref{Prop:backtracking}, the values of $\tau_{\tH}$ and $\tau_{\alpha}$ at the $k$th iteration of \cref{Alg:proposed} are successfully determined in finite steps, say $\tau_{\tH}^{(k)}$ and $\tau_{\alpha}^{(k)}$, respectively. It follows that
\begin{subequations}
\begin{align}
\label{tH_decrease}
\cE (\tH^{(k)}, \alpha^{(k)}) &\geq \cE (\tH^{(k+1)}, \alpha^{(k)}) + \frac{1}{2 \tau_{\tH}^{(k)}} \| \tH^{(k+1)} - \tH^{(k)} \|_F^2, \\
\label{alpha_decrease}
\cE (\tH^{(k+1)}, \alpha^{(k)}) &\geq \cE (\tH^{(k+1)}, \alpha^{(k+1)}) + \frac{1}{2 \tau_{\alpha}^{(k)}} \| \alpha^{(k+1)} - \alpha^{(k)} \|_2^2.
\end{align}
\end{subequations}
Summing~\cref{tH_decrease,alpha_decrease} yields
\begin{equation*}
\cE (\tH^{(k)}, \alpha^{(k)}) \geq \cE (\tH^{(k+1)}, \alpha^{(k+1)}) + \frac{1}{2 \tau_{\tH}^{(k)}} \| \tH^{(k+1)} - \tH^{(k)} \|_F^2 + \frac{1}{2 \tau_{\alpha}^{(k)}} \| \alpha^{(k+1)} - \alpha^{(k)} \|_2^2,
\end{equation*}
which implies that the sequence $\{ \cE (\tH^{(k)}, \alpha^{(k)}) \}_k$ is decreasing. Moreover, since $\cE (\tH, \alpha)$ is bounded below~(see \cref{Lem:log}), we deduce that $\{ \cE (\tH^{(k)}, \alpha^{(k)}) \}_k$ is convergent when $t \rightarrow \infty$.
\end{proof}

\subsection{Initialization}
Since the proposed model~\cref{proposed_varpro} is nonconvex, an output of \cref{Alg:proposed} is an approximation for one of the local minima of the energy functional $\cE (\tH, \alpha)$. The corresponding local minimum is sensitive to the choice of an initial guess $(\tH^{(0)}, \alpha^{(0)})$ of \cref{Alg:proposed}~\cite{YL:2020}. Here, we deal with some ways to choose initial guesses of \cref{Alg:proposed} that yield satisfactory results. We note that the same issue was considered for the $\ell^2$-optimized DMD model in~\cite[section~3.2]{AK:2018}. Choosing $\tH^{(0)}$ is straightforward; we simply set $\tH^{(0)} = H$. Then we clearly have $\tH^{(0)} \in \cS_H^{\circ}$. Meanwhile, choosing $\alpha^{(0)}$ is relatively complicated.

The first simple strategy for choosing $\alpha^{(0)}$ is to adopt the initialization scheme~\cite[Algorithm~4]{AK:2018} designed for the $\ell^2$-optimized DMD model. The initialization routine~\cite[Algorithm~4]{AK:2018} solves a finite difference approximation of the target linear dynamical system~\cref{dynamic} using the given time series $X$. Since the $\ell^2$-optimized DMD model and the proposed model shares the same target linear dynamical system~\cref{dynamic}, we may adopt the initialization routine proposed by Askham and Kutz without modification.

The second strategy is to regard \cref{Alg:proposed} as a postprocessing step for the $\ell^2$-optimized DMD model. That is, we set $\alpha^{(0)}$ as the output of the $\ell^2$-optimized DMD model equipped with the same data $X$. Then, starting from the output of the $\ell^2$-optimized DMD model, \cref{Alg:proposed} will find an output that is more suitable for multiplicative noise according to the proposed model~\cref{proposed_varpro}.

It is not clear which of the two initialization strategies for $\alpha^{(0)}$ explained above results in better output than the other. To obtain a better result, we can do the following procedure: we run \cref{Alg:proposed} twice with the above-mentioned initialization schemes, then pick the one with smaller energy value $\cE(\tH, \alpha)$ among the outputs. This procedure is summarized in \cref{Alg:proposed_full}.

\begin{algorithm}
\caption{Full procedure to solve the proposed model~\cref{proposed_varpro} incorporating two initialization strategies}
\begin{algorithmic}[]
\label{Alg:proposed_full}
\STATE $\bullet$ Run \cref{Alg:proposed} with $(\tH^{(0)}, \alpha^{(0)}) = (H, \alpha^{0,1})$, where $\alpha^{0,1}$ is obtained by~\cite[Algorithm~4]{AK:2018}. Let $(\tH^{\star, 1}, \alpha^{\star,1})$ denote the result.
\STATE $\bullet$ Run \cref{Alg:proposed} with $(\tH^{(0)}, \alpha^{(0)}) = (H, \alpha^{0,2})$, where $\alpha^{0,2}$ is obtained by the $\ell^2$-optimized DMD model~\cref{AK_varpro}. Let $(\tH^{\star, 2}, \alpha^{\star,2})$ denote the result.
\IF {$\cE (\tH^{\star, 1}, \alpha^{\star,1}) \leq  \cE (\tH^{\star, 2}, \alpha^{\star,2})$}
\STATE \nonumber Return $(\tH^{\star, 1}, \alpha^{\star,1})$.
\ELSE
\STATE Return $(\tH^{\star, 2}, \alpha^{\star,2})$.
\ENDIF
\end{algorithmic}
\end{algorithm}

\section{Numerical experiments}
\label{Sec:Numerical}
In this section, we demonstrate the proposed DMD model to three numerical examples. Two of these examples are simple synthetic data presented in~\cite{AK:2018,DHWR:2016}, which represent a periodic linear system and a system containing hidden dynamics. The third example is the pressure fluctuation in a one-dimensional combustor~\cite{waugh2011}.

In the numerical experiments presented in this section, we used the stop criterion
\begin{equation*}
\frac{\| \alpha^{(k)} - \alpha^{(k-1)} \|_2 }{\| \alpha^{(k)} \|_2} < \mathrm{TOL},
\end{equation*}
for the Levenberg--Marquardt algorithm to solving the $\ell^2$-optimized DMD model~\cref{AK_varpro}, while we used
\begin{equation*}
\max \left\{ \frac{ \| \tH^{(k)} - \tH^{(k-1)}\|_F }{\| \tH^{(k)} \|_F}, \frac{\| \alpha^{(k)} - \alpha^{(k-1)} \|_2 }{\| \alpha^{(k)} \|_2}  \right\} < \mathrm{TOL},
\end{equation*}
for~\cref{Alg:proposed} solving the proposed model~\cref{proposed_varpro}, where $\mathrm{TOL} = 10^{-5}$. We often use the following distance function allowing permutations over entries to measure how far two vectors are apart from each other:
\begin{equation}
\label{d}
d(\alpha_1, \alpha_2) = \min \left\{ \| \tilde{\alpha}_1 - \tilde{\alpha}_2 \|_2
: \tilde{\alpha}_1 \text{ and } \tilde{\alpha}_2 \text{ are permutations of } \alpha_1 \text{ and } \alpha_2 \text{, respectively.}\right\}
\end{equation}
We also measure the reconstruction error $E_{\recon}$ defined as follows for the sake of comparison among various DMD models:
\begin{equation}
    \label{recon}
    E_{\recon} = \frac{\|H_{\clean}-H_{\recon} \|_F}{\|H_{\clean}\|_F},
\end{equation}
where $H_{\clean}$ is the snapshots of the zero-noise data and $H_{\recon}$ is given by
\begin{equation*}
H_{\recon} = \begin{cases}
\Phi (\alpha) \Phi (\alpha)^{\dag} H & \textrm{ for the $\ell^2$-optimized DMD model~\cref{AK_varpro}}, \\
\Phi (\alpha) \Phi (\alpha)^{\dag} \tH & \textrm{ for the proposed model~\cref{proposed_varpro}}.
\end{cases}
\end{equation*}
For the step sizes of \cref{Alg:proposed}, we simply set $\tau_{\tH} = 0.1$ and $\tau_{\alpha} = 0.1/\eta$; thanks to the backtracking processes in \cref{Alg:proposed}, initial choices on $\tau_{\tH}$ and $\tau_{\alpha}$ do not critically affect on the performance of the proposed model. All algorithms presented in this section are implemented in MATLAB R2020b and are performed on a computer equipped with two Intel Xeon SP-6148 CPUs~(2.4GHz, 20C), 384GB RAM, and the operating system CentOS 7.8 64-bit.

\subsection{Periodic system}
As the first example, we consider the two-dimensional linear system
\begin{equation*}
\dot{z} = \begin{bmatrix} 1 & -2 \\ 1 & -1\end{bmatrix} z
\end{equation*}
with the initial condition $z (0) = [1, 0.1]^{\rT}$. It is straightforward to check that the solution of the above system is given by
\begin{equation}
\label{periodic_clean}
z (t) = \begin{bmatrix} z_1 (t) \\ z_2(t) \end{bmatrix}
= \begin{bmatrix} \sin t + \cos t \\ \sin t \end{bmatrix}
+ 0.1 \begin{bmatrix} -2 \sin t \\ \cos t - \sin t\end{bmatrix}.
\end{equation}
The continuous-time eigenvalues of~\cref{periodic_clean} are $\pm i$. Snapshots $\{ \xi^n \}_n$ are taken as
\begin{equation}
\label{periodic_noisy}
\xi^n = \begin{bmatrix} z_1((n-1) \Delta t) \epsilon_{1n} \\ z_2 ((n-1) \Delta t) \epsilon_{2n} \end{bmatrix}, \quad 1 \leq n \leq N,
\end{equation}
where $\Delta t = 0.1$ and $\epsilon_{mn}$~($m= 1,2$) represents multiplicative noise following the gamma distribution of mean $1$ and variance $\sigma^2$. 

\begin{table}
\begin{subtable}[]{\textwidth}
\centering
\begin{adjustbox}{max width=\textwidth}
\begin{tabular}{c | c c c c c} \hline
\multirow{2}{*}{$N$} & \multirow{2}{*}{$\ell^2$-optimized DMD} & \multicolumn{4}{c}{Proposed model} \\
\cline{3-6}
& & $\eta = 10^1$ & $\eta = 10^2$ & $\eta = 10^3$ & $\eta = 10^4$ \\
\hline
$2^4$
& \begin{tabular}{c}$9.14 \times 10^{-1}$ \\ ($7.02 \times 10^{-1}$) \end{tabular} 
& \begin{tabular}{c}$7.33 \times 10^{-1}$ \\ ($6.40 \times 10^{-1}$) \end{tabular} 
& \begin{tabular}{c}$6.68 \times 10^{-1}$ \\ ($4.81 \times 10^{-1}$) \end{tabular} 
& \begin{tabular}{c}$6.03 \times 10^{-1}$ \\ ($4.15 \times 10^{-1}$) \end{tabular} 
& \begin{tabular}{c}$4.43 \times 10^{-1}$ \\ ($3.86 \times 10^{-1}$) \end{tabular} \\ 
$2^5$
& \begin{tabular}{c}$1.07 \times 10^{-1}$ \\ ($6.37 \times 10^{-2}$) \end{tabular} 
& \begin{tabular}{c}$7.23 \times 10^{-2}$ \\ ($4.79 \times 10^{-2}$) \end{tabular} 
& \begin{tabular}{c}$6.15 \times 10^{-2}$ \\ ($4.32 \times 10^{-2}$) \end{tabular} 
& \begin{tabular}{c}$5.46 \times 10^{-2}$ \\ ($4.10 \times 10^{-2}$) \end{tabular} 
& \begin{tabular}{c}$3.75 \times 10^{-2}$ \\ ($2.92 \times 10^{-2}$) \end{tabular} \\
$2^6$
& \begin{tabular}{c}$3.00 \times 10^{-2}$ \\ ($1.65 \times 10^{-2}$) \end{tabular} 
& \begin{tabular}{c}$1.85 \times 10^{-2}$ \\ ($1.18 \times 10^{-2}$) \end{tabular} 
& \begin{tabular}{c}$1.44 \times 10^{-2}$ \\ ($1.10 \times 10^{-2}$) \end{tabular} 
& \begin{tabular}{c}$1.20 \times 10^{-2}$ \\ ($1.02 \times 10^{-2}$) \end{tabular} 
& \begin{tabular}{c}$1.28 \times 10^{-2}$ \\ ($9.66 \times 10^{-3}$) \end{tabular} \\
$2^7$
& \begin{tabular}{c}$1.00 \times 10^{-2}$ \\ ($5.36 \times 10^{-3}$) \end{tabular} 
& \begin{tabular}{c}$5.71 \times 10^{-3}$ \\ ($3.48 \times 10^{-3}$) \end{tabular} 
& \begin{tabular}{c}$4.21 \times 10^{-3}$ \\ ($3.21 \times 10^{-3}$) \end{tabular} 
& \begin{tabular}{c}$3.34 \times 10^{-3}$ \\ ($3.16 \times 10^{-3}$) \end{tabular} 
& \begin{tabular}{c}$4.61 \times 10^{-3}$ \\ ($3.32 \times 10^{-3}$) \end{tabular} \\
\hline
\end{tabular}
\end{adjustbox}
\caption{$\sigma^2 = 10^{-1}$}
\end{subtable}

\begin{subtable}[]{\textwidth}
\centering
\begin{adjustbox}{max width=\textwidth}
\begin{tabular}{c | c c c c c} \hline
\multirow{2}{*}{$N$} & \multirow{2}{*}{$\ell^2$-optimized DMD} & \multicolumn{4}{c}{Proposed model} \\
\cline{3-6}
& & $\eta = 10^1$ & $\eta = 10^2$ & $\eta = 10^3$ & $\eta = 10^4$ \\
\hline
$2^4$
& \begin{tabular}{c}$2.41 \times 10^{-1}$ \\ ($1.52 \times 10^{-1}$) \end{tabular} 
& \begin{tabular}{c}$1.94 \times 10^{-1}$ \\ ($1.19 \times 10^{-1}$) \end{tabular} 
& \begin{tabular}{c}$1.88 \times 10^{-1}$ \\ ($1.15 \times 10^{-1}$) \end{tabular} 
& \begin{tabular}{c}$1.73 \times 10^{-1}$ \\ ($1.17 \times 10^{-1}$) \end{tabular} 
& \begin{tabular}{c}$8.31 \times 10^{-2}$ \\ ($5.83 \times 10^{-2}$) \end{tabular} \\ 
$2^5$
& \begin{tabular}{c}$3.34 \times 10^{-2}$ \\ ($1.90 \times 10^{-2}$) \end{tabular} 
& \begin{tabular}{c}$1.94 \times 10^{-2}$ \\ ($1.32 \times 10^{-2}$) \end{tabular} 
& \begin{tabular}{c}$1.57 \times 10^{-2}$ \\ ($1.13 \times 10^{-2}$) \end{tabular} 
& \begin{tabular}{c}$1.29 \times 10^{-2}$ \\ ($1.06 \times 10^{-2}$) \end{tabular} 
& \begin{tabular}{c}$9.28 \times 10^{-3}$ \\ ($5.66 \times 10^{-3}$) \end{tabular} \\
$2^6$
& \begin{tabular}{c}$9.45 \times 10^{-3}$ \\ ($5.34 \times 10^{-3}$) \end{tabular} 
& \begin{tabular}{c}$4.61 \times 10^{-3}$ \\ ($2.64 \times 10^{-3}$) \end{tabular} 
& \begin{tabular}{c}$2.87 \times 10^{-3}$ \\ ($1.93 \times 10^{-3}$) \end{tabular} 
& \begin{tabular}{c}$1.61 \times 10^{-3}$ \\ ($1.41 \times 10^{-2}$) \end{tabular} 
& \begin{tabular}{c}$3.44 \times 10^{-3}$ \\ ($2.25 \times 10^{-3}$) \end{tabular} \\
$2^7$
& \begin{tabular}{c}$3.25 \times 10^{-3}$ \\ ($1.82 \times 10^{-3}$) \end{tabular} 
& \begin{tabular}{c}$1.52 \times 10^{-3}$ \\ ($8.68 \times 10^{-4}$) \end{tabular} 
& \begin{tabular}{c}$8.24 \times 10^{-4}$ \\ ($4.79 \times 10^{-3}$) \end{tabular} 
& \begin{tabular}{c}$3.61 \times 10^{-4}$ \\ ($3.17 \times 10^{-4}$) \end{tabular} 
& \begin{tabular}{c}$1.18 \times 10^{-3}$ \\ ($7.54 \times 10^{-4}$) \end{tabular} \\
\hline
\end{tabular}
\end{adjustbox}
\caption{$\sigma^2 = 10^{-2}$}
\end{subtable}

\begin{subtable}[]{\textwidth}
\centering
\begin{adjustbox}{max width=\textwidth}
\begin{tabular}{c | c c c c c} \hline
\multirow{2}{*}{$N$} & \multirow{2}{*}{$\ell^2$-optimized DMD} & \multicolumn{4}{c}{Proposed model} \\
\cline{3-6}
& & $\eta = 10^1$ & $\eta = 10^2$ & $\eta = 10^3$ & $\eta = 10^4$ \\
\hline
$2^4$
& \begin{tabular}{c}$7.45 \times 10^{-2}$ \\ ($4.35 \times 10^{-2}$) \end{tabular} 
& \begin{tabular}{c}$6.08 \times 10^{-2}$ \\ ($3.72 \times 10^{-2}$) \end{tabular} 
& \begin{tabular}{c}$5.73 \times 10^{-2}$ \\ ($3.52 \times 10^{-2}$) \end{tabular} 
& \begin{tabular}{c}$4.21 \times 10^{-2}$ \\ ($3.15 \times 10^{-2}$) \end{tabular} 
& \begin{tabular}{c}$2.15 \times 10^{-2}$ \\ ($1.19 \times 10^{-2}$) \end{tabular} \\ 
$2^5$
& \begin{tabular}{c}$1.03 \times 10^{-2}$ \\ ($6.20 \times 10^{-3}$) \end{tabular} 
& \begin{tabular}{c}$5.35 \times 10^{-3}$ \\ ($3.28 \times 10^{-3}$) \end{tabular} 
& \begin{tabular}{c}$3.69 \times 10^{-3}$ \\ ($2.76 \times 10^{-3}$) \end{tabular} 
& \begin{tabular}{c}$2.19 \times 10^{-3}$ \\ ($1.65 \times 10^{-3}$) \end{tabular} 
& \begin{tabular}{c}$2.95 \times 10^{-3}$ \\ ($1.83 \times 10^{-3}$) \end{tabular} \\
$2^6$
& \begin{tabular}{c}$3.05 \times 10^{-3}$ \\ ($1.68 \times 10^{-3}$) \end{tabular} 
& \begin{tabular}{c}$1.32 \times 10^{-3}$ \\ ($7.43 \times 10^{-4}$) \end{tabular} 
& \begin{tabular}{c}$7.56 \times 10^{-4}$ \\ ($4.34 \times 10^{-4}$) \end{tabular} 
& \begin{tabular}{c}$3.44 \times 10^{-4}$ \\ ($2.17 \times 10^{-4}$) \end{tabular} 
& \begin{tabular}{c}$1.15 \times 10^{-3}$ \\ ($6.45 \times 10^{-4}$) \end{tabular} \\
$2^7$
& \begin{tabular}{c}$1.04 \times 10^{-3}$ \\ ($5.90 \times 10^{-4}$) \end{tabular} 
& \begin{tabular}{c}$4.15 \times 10^{-4}$ \\ ($2.33 \times 10^{-4}$) \end{tabular} 
& \begin{tabular}{c}$2.29 \times 10^{-4}$ \\ ($1.35 \times 10^{-4}$) \end{tabular} 
& \begin{tabular}{c}$6.23 \times 10^{-5}$ \\ ($4.41 \times 10^{-5}$) \end{tabular} 
& \begin{tabular}{c}$7.87 \times 10^{-5}$ \\ ($8.93 \times 10^{-5}$) \end{tabular} \\
\hline
\end{tabular}
\end{adjustbox}
\caption{$\sigma^2 = 10^{-3}$}
\end{subtable}
\caption{The average and sample standard deviation~(given in parentheses) of the eigenvalue error $d(\alpha^{\star}, \alpha_{\exact})$ over 1,000 independent trials for the problem~\cref{periodic_noisy} when the noise variance $\sigma^2$, the number of snapshots $N$, and the penalty parameter $\eta$ vary, where $d(\cdot, \cdot)$ is given in~\cref{d}, $\alpha_{\exact} = [i, -i]^{\rT}$ denotes the exact eigenvalue of the clean dynamics~\cref{periodic_clean}, and $\alpha^{\star}$ is a solution obtained by either the $\ell^2$-optimized DMD model~\cref{AK_varpro} or the proposed model~\cref{proposed_varpro} with the initial guess $\alpha^{(0)} = \alpha_{\exact}$.}
\label{Table:periodic_valid}
\end{table}

To validate that the proposed model is well-designed for multiplicative noise, we verify whether there exists a local minimum of~\cref{proposed_varpro} very close to the exact eigenvalues $\alpha_{\exact} = [i, -i]^{\rT}$ when $X = H^{\rT}$ is given by~\cref{periodic_noisy}. \Cref{Table:periodic_valid} presents the average and sample standard deviation of $d(\alpha^{\star}, \alpha_{\exact})$ over 1,000 independent trials for various settings on the noise variance $\sigma^2$, the number of snapshots $N$, and the penalty parameter $\eta$, where $\alpha^{\star}$ is a solution obtained by either the $\ell^2$-optimized DMD model~\cref{AK_varpro} or the proposed model~\cref{proposed_varpro} with the initial guess $\alpha^{(0)} = \alpha_{\exact}$. In all cases, the proposed model results in smaller $d(\alpha^{\star}, \alpha_{\exact})$ than that of the $\ell^2$-optimized DMD model. This means that~\cref{proposed_varpro} possesses a local minimum close to $\alpha_{\exact}$, and the distance between the local minimum and $\alpha_{\exact}$ is smaller than that of~\cref{AK_varpro}. We can also observe that the proposed model performs especially well when the value of $\eta$ is around $10^3$ and $10^4$, i.e., when $\eta$ is sufficiently large. This verifies \cref{Prop:gamma}; when $\eta$ is large enough, the proposed model~\cref{proposed_varpro} behaves like~\cref{proposed_nonpenalty}, which is an optimal model for multiplicative noise in the sense of MAP estimator, so that it gives more accurate results than other models when the data is corrupted by multiplicative noise.

\begin{figure}[]
\centering
\includegraphics[width=0.95\linewidth]{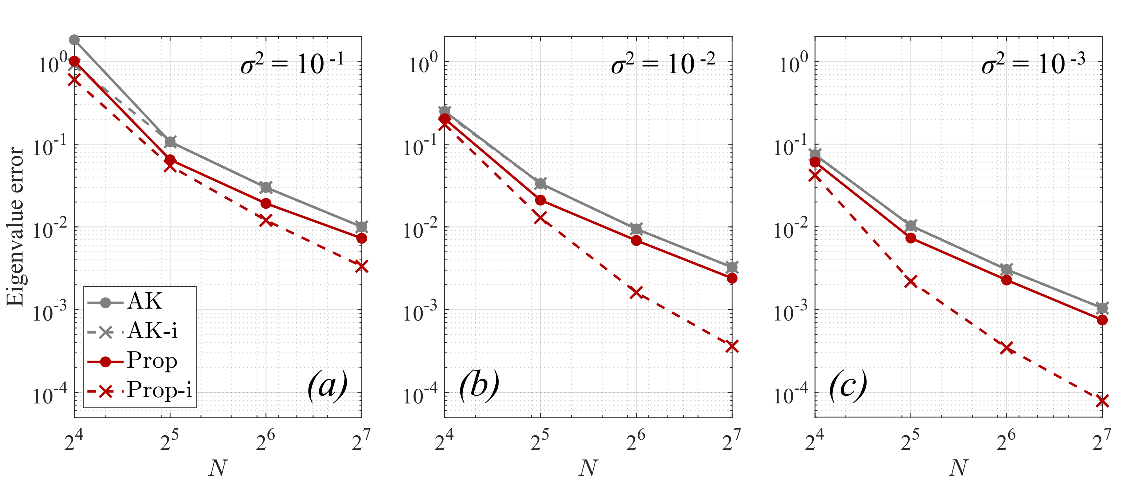}
\caption{The average of the eigenvalue error $d(\alpha^{\star}, \alpha_{\exact})$ over 1,000 independent trials for the problem~\cref{periodic_noisy} with respect to various numbers of snapshots $N$, where $d(\cdot, \cdot)$ is given in~\cref{d} and $\alpha_{\exact} = [i, -i]^{\rT}$. Here, $\alpha^{\star}$ denotes a solution obtained by either the $\ell^2$-optimized DMD model~\cref{AK_varpro}~(AK) or the proposed model~\cref{proposed_varpro}~(Prop) with $\eta = 10^3$.
The expression ``-i'' means that an ideal initial guess $\alpha^{(0)} = \alpha_{\exact}$ is used.}
\label{Fig:periodic_error}
\end{figure}

\begin{figure}[]
\centering
\includegraphics[width=0.95\linewidth]{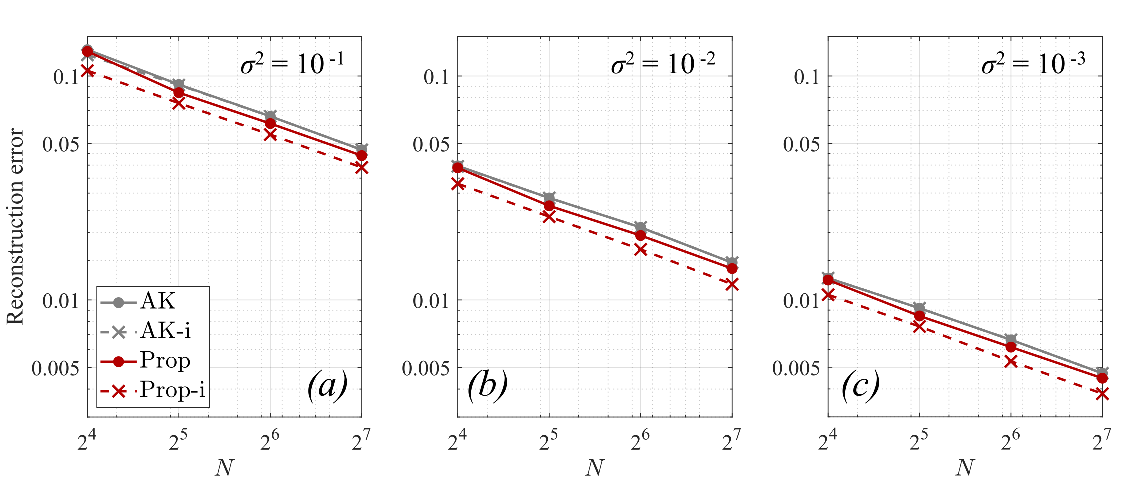}
\caption{The average of the reconstruction error $E_{\recon}$ over 1,000 independent trials for the problem~\cref{periodic_noisy} with respect to various numbers of snapshots $N$, where $E_{\recon}$ is given in~\cref{recon} and $\alpha_{\exact} = [i, -i]^{\rT}$. Here, $\alpha^{\star}$ denotes a solution obtained by either the $\ell^2$-optimized DMD model~\cref{AK_varpro}~(AK) or the proposed model~\cref{proposed_varpro}~(Prop) with $\eta = 10^3$.
The expression ``-i'' means that an ideal initial guess $\alpha^{(0)} = \alpha_{\exact}$ is used.}
\label{Fig:periodic_recon}
\end{figure}

Next, we observe the practical performance of the proposed model. We set $\eta = 10^3$ in~\cref{proposed_varpro}. The following algorithms are used in the numerical experiments for \cref{Fig:periodic_error,Fig:periodic_recon}:
\begin{itemize}
\item AK: Levenberg--Marquardt algorithm to solve the $\ell^2$-optimized DMD model~\cref{AK_varpro} with the initial guess determined by~\cite[Algorithm~4]{AK:2018}.
\item AK-i: Levenberg--Marquardt algorithm to solve the $\ell^2$-optimized DMD model~\cref{AK_varpro} with an ideal initial guess $\alpha^{(0)} = \alpha_{\exact}$.
\item Prop: \cref{Alg:proposed_full} to solve the proposed model~\cref{proposed_varpro}.
\item Prop-i: \cref{Alg:proposed} to solve the proposed model~\cref{proposed_varpro} with an ideal initial guess $\alpha^{(0)} = \alpha_{\exact}$.
\end{itemize}
Algorithms AK and Prop give practical performances of the $\ell^2$-optimized DMD and proposed models, while AK-i and Prop-i are intended to show ideal performances of two models, respectively. That is, AK-i and Prop-i represent a scenario that we can find a good initial guess $\alpha^{(0)}$ that is almost identical to $\alpha_{\exact}$. However, we do not have such an ideal situation in most cases, and what we can generally expect from the $\ell^2$-optimized DMD and proposed models are the results of AK and Prop. \Cref{Fig:periodic_error} shows the average of $d(\alpha^{\star}, \alpha_{\exact})$ over 1,000 independent trials for the problem~\cref{periodic_noisy} with respect to varying $N$, where $\alpha^{\star}$ denotes a solution obtained by either of the above four algorithms. \Cref{Fig:periodic_recon} presents $E_{\recon}$ defined in~\cref{recon} under the same setting as \cref{Fig:periodic_error}. All algorithms perform well in the sense that both the eigenvalue and reconstruction errors decay as the number of snapshots $N$ increases. Across all values of $\sigma^2$ and $N$, AK and AK-i result in almost identical eigenvalue and reconstruction errors, while Prop showed lower eigenvalue and reconstruction errors than them. From this observation, we can conclude that the proposed model is more effective than the $\ell^2$-optimized DMD model, even in a situation that an ideal initial guess for the $\ell^2$-optimized DMD model is available. Another interesting aspect of \cref{Fig:periodic_error} is that the error curves of Prop-i lie much lower than those of all the other algorithms. Although Prop showed better performance than AK-i and AK, there are additional gaps between the errors of Prop-i and Prop. This demonstrates that finding better initialization schemes for \cref{Alg:proposed} can significantly improve the performance of the proposed model; designing better initialization schemes remains a topic for the future research.

\subsection{Hidden dynamics}
In~\cite{AK:2018,DHWR:2016}, it was mentioned that identifying the dynamics of a signal containing rapidly decaying components is quite challenging. We consider the signal
\begin{equation}
\label{hidden_clean}
z(x,t) = \sin (x - t) e^{t} + \sin (0.4 x - 3.7 t) e^{-0.2 t},
\end{equation}
which is a superposition of two travelling sinusoidal signals, with one growing and the other decaying~\cite{AK:2018,DHWR:2016}. It has four continuous-time eigenvalues $1 \pm i$ and $-0.2 \pm 3.7 i$.
We set the spatial domain as $[0, 15]$ and use 300 equispaced points to discretize, i.e., $M = 300$ and $\Delta x = 15/(M-1)$. We also set the temporal domain as $[0,1]$, with $N$ equispaced discretization points, i.e., $\Delta t = 1/(N-1)$. In this setting, the corresponding matrix of snapshots $X$ corrupted by gamma multiplicative noise is given by
\begin{equation}
\label{hidden_noisy}
X_{mn} = z((m-1) \Delta x, (n-1) \Delta t) \epsilon_{mn}, \quad 1 \leq m \leq M, \gap 1 \leq n \leq N,
\end{equation}
where $\epsilon_{mn}$ follows the gamma distribution of mean $1$ and variance $\sigma^2$.

\begin{table}
\begin{subtable}[]{\textwidth}
\centering
\begin{adjustbox}{max width=\textwidth}
\begin{tabular}{c | c c c c c} \hline
\multirow{2}{*}{$N$} & \multirow{2}{*}{$\ell^2$-optimized DMD} & \multicolumn{4}{c}{Proposed model} \\
\cline{3-6}
& & $\eta = 10^3$ & $\eta = 10^4$ & $\eta = 10^5$ & $\eta = 10^6$ \\
\hline
$2^4$
& \begin{tabular}{c}$3.77 \times 10^{0}$ \\ ($3.64 \times 10^{0}$) \end{tabular} 
& \begin{tabular}{c}$3.06 \times 10^{0}$ \\ ($2.13 \times 10^{0}$) \end{tabular} 
& \begin{tabular}{c}$2.94 \times 10^{0}$ \\ ($1.01 \times 10^{0}$) \end{tabular} 
& \begin{tabular}{c}$2.76 \times 10^{0}$ \\ ($6.75 \times 10^{-1}$) \end{tabular} 
& \begin{tabular}{c}$2.90 \times 10^{0}$ \\ ($8.65 \times 10^{-1}$) \end{tabular} \\ 
$2^5$
& \begin{tabular}{c}$2.38 \times 10^{0}$ \\ ($9.44 \times 10^{-1}$) \end{tabular} 
& \begin{tabular}{c}$2.04 \times 10^{0}$ \\ ($4.48 \times 10^{-1}$) \end{tabular} 
& \begin{tabular}{c}$1.83 \times 10^{0}$ \\ ($4.15 \times 10^{-1}$) \end{tabular} 
& \begin{tabular}{c}$1.97 \times 10^{0}$ \\ ($4.45 \times 10^{-1}$) \end{tabular} 
& \begin{tabular}{c}$1.94 \times 10^{0}$ \\ ($2.56 \times 10^{-1}$) \end{tabular} \\
$2^6$
& \begin{tabular}{c}$1.37 \times 10^{0}$ \\ ($5.35 \times 10^{-1}$) \end{tabular} 
& \begin{tabular}{c}$1.28 \times 10^{0}$ \\ ($4.66 \times 10^{-1}$) \end{tabular} 
& \begin{tabular}{c}$1.14 \times 10^{0}$ \\ ($3.93 \times 10^{-1}$) \end{tabular} 
& \begin{tabular}{c}$1.13 \times 10^{0}$ \\ ($3.91 \times 10^{-1}$) \end{tabular} 
& \begin{tabular}{c}$1.20 \times 10^{0}$ \\ ($4.20 \times 10^{-1}$) \end{tabular} \\
$2^7$
& \begin{tabular}{c}$7.26 \times 10^{-1}$ \\ ($3.22 \times 10^{-1}$) \end{tabular} 
& \begin{tabular}{c}$7.00 \times 10^{-1}$ \\ ($2.97 \times 10^{-1}$) \end{tabular} 
& \begin{tabular}{c}$6.85 \times 10^{-1}$ \\ ($2.90 \times 10^{-1}$) \end{tabular} 
& \begin{tabular}{c}$6.80 \times 10^{-1}$ \\ ($2.73 \times 10^{-1}$) \end{tabular} 
& \begin{tabular}{c}$6.64 \times 10^{-1}$ \\ ($2.62 \times 10^{-1}$) \end{tabular} \\
\hline
\end{tabular}
\end{adjustbox}
\caption{$\sigma^2 = 2^{-5}$}
\end{subtable}

\begin{subtable}[]{\textwidth}
\centering
\begin{adjustbox}{max width=\textwidth}
\begin{tabular}{c | c c c c c} \hline
\multirow{2}{*}{$N$} & \multirow{2}{*}{$\ell^2$-optimized DMD} & \multicolumn{4}{c}{Proposed model} \\
\cline{3-6}
& & $\eta = 10^3$ & $\eta = 10^4$ & $\eta = 10^5$ & $\eta = 10^6$ \\
\hline
$2^4$
& \begin{tabular}{c}$1.04 \times 10^{0}$ \\ ($4.36 \times 10^{0}$) \end{tabular} 
& \begin{tabular}{c}$1.01 \times 10^{0}$ \\ ($4.04 \times 10^{0}$) \end{tabular} 
& \begin{tabular}{c}$1.02 \times 10^{0}$ \\ ($4.13 \times 10^{0}$) \end{tabular} 
& \begin{tabular}{c}$9.76 \times 10^{-1}$ \\ ($3.87 \times 10^{-1}$) \end{tabular} 
& \begin{tabular}{c}$9.59 \times 10^{-1}$ \\ ($3.60 \times 10^{-1}$) \end{tabular} \\ 
$2^5$
& \begin{tabular}{c}$6.12 \times 10^{-1}$ \\ ($2.75 \times 10^{-1}$) \end{tabular} 
& \begin{tabular}{c}$5.72 \times 10^{-1}$ \\ ($2.36 \times 10^{-1}$) \end{tabular} 
& \begin{tabular}{c}$5.41 \times 10^{-1}$ \\ ($2.17 \times 10^{-1}$) \end{tabular} 
& \begin{tabular}{c}$5.18 \times 10^{-1}$ \\ ($1.88 \times 10^{-1}$) \end{tabular} 
& \begin{tabular}{c}$5.33 \times 10^{-1}$ \\ ($2.06 \times 10^{-1}$) \end{tabular} \\
$2^6$
& \begin{tabular}{c}$3.72 \times 10^{-1}$ \\ ($1.62 \times 10^{-1}$) \end{tabular} 
& \begin{tabular}{c}$3.50 \times 10^{-1}$ \\ ($1.50 \times 10^{-1}$) \end{tabular} 
& \begin{tabular}{c}$3.32 \times 10^{-1}$ \\ ($1.27 \times 10^{-1}$) \end{tabular} 
& \begin{tabular}{c}$3.16 \times 10^{-1}$ \\ ($1.12 \times 10^{-1}$) \end{tabular} 
& \begin{tabular}{c}$3.35 \times 10^{-1}$ \\ ($1.34 \times 10^{-1}$) \end{tabular} \\
$2^7$
& \begin{tabular}{c}$2.36 \times 10^{-1}$ \\ ($9.45 \times 10^{-2}$) \end{tabular} 
& \begin{tabular}{c}$2.24 \times 10^{-1}$ \\ ($9.50 \times 10^{-2}$) \end{tabular} 
& \begin{tabular}{c}$2.30 \times 10^{-1}$ \\ ($9.24 \times 10^{-2}$) \end{tabular} 
& \begin{tabular}{c}$2.17 \times 10^{-1}$ \\ ($8.23 \times 10^{-2}$) \end{tabular} 
& \begin{tabular}{c}$2.24 \times 10^{-1}$ \\ ($8.90 \times 10^{-2}$) \end{tabular} \\
\hline
\end{tabular}
\end{adjustbox}
\caption{$\sigma^2 = 2^{-7}$}
\end{subtable}

\begin{subtable}[]{\textwidth}
\centering
\begin{adjustbox}{max width=\textwidth}
\begin{tabular}{c | c c c c c} \hline
\multirow{2}{*}{$N$} & \multirow{2}{*}{$\ell^2$-optimized DMD} & \multicolumn{4}{c}{Proposed model} \\
\cline{3-6}
& & $\eta = 10^3$ & $\eta = 10^4$ & $\eta = 10^5$ & $\eta = 10^6$ \\
\hline
$2^4$
& \begin{tabular}{c}$2.99 \times 10^{-1}$ \\ ($1.26 \times 10^{-1}$) \end{tabular} 
& \begin{tabular}{c}$2.94 \times 10^{-1}$ \\ ($1.24 \times 10^{-1}$) \end{tabular} 
& \begin{tabular}{c}$2.93 \times 10^{-1}$ \\ ($1.20 \times 10^{-1}$) \end{tabular} 
& \begin{tabular}{c}$2.89 \times 10^{-1}$ \\ ($1.11 \times 10^{-1}$) \end{tabular} 
& \begin{tabular}{c}$2.86 \times 10^{-1}$ \\ ($1.12 \times 10^{-1}$) \end{tabular} \\ 
$2^5$
& \begin{tabular}{c}$2.13 \times 10^{-1}$ \\ ($8.50 \times 10^{-2}$) \end{tabular} 
& \begin{tabular}{c}$1.88 \times 10^{-1}$ \\ ($7.90 \times 10^{-2}$) \end{tabular} 
& \begin{tabular}{c}$1.86 \times 10^{-1}$ \\ ($7.50 \times 10^{-2}$) \end{tabular} 
& \begin{tabular}{c}$1.82 \times 10^{-1}$ \\ ($7.10 \times 10^{-2}$) \end{tabular} 
& \begin{tabular}{c}$1.82 \times 10^{-1}$ \\ ($7.54 \times 10^{-2}$) \end{tabular} \\
$2^6$
& \begin{tabular}{c}$1.49 \times 10^{-1}$ \\ ($5.90 \times 10^{-2}$) \end{tabular} 
& \begin{tabular}{c}$1.30 \times 10^{-1}$ \\ ($5.46 \times 10^{-2}$) \end{tabular} 
& \begin{tabular}{c}$1.28 \times 10^{-1}$ \\ ($5.43 \times 10^{-2}$) \end{tabular} 
& \begin{tabular}{c}$1.24 \times 10^{-1}$ \\ ($5.07 \times 10^{-2}$) \end{tabular} 
& \begin{tabular}{c}$1.25 \times 10^{-1}$ \\ ($5.13 \times 10^{-2}$) \end{tabular} \\
$2^7$
& \begin{tabular}{c}$1.04 \times 10^{-1}$ \\ ($4.40 \times 10^{-2}$) \end{tabular} 
& \begin{tabular}{c}$9.68 \times 10^{-2}$ \\ ($4.01 \times 10^{-2}$) \end{tabular} 
& \begin{tabular}{c}$9.81 \times 10^{-2}$ \\ ($4.19 \times 10^{-2}$) \end{tabular} 
& \begin{tabular}{c}$9.58 \times 10^{-2}$ \\ ($4.03 \times 10^{-2}$) \end{tabular} 
& \begin{tabular}{c}$9.28 \times 10^{-2}$ \\ ($4.07 \times 10^{-2}$) \end{tabular} \\
\hline
\end{tabular}
\end{adjustbox}
\caption{$\sigma^2 = 2^{-9}$}
\end{subtable}
\caption{The average and sample standard deviation~(given in parentheses) of the eigenvalue error $d(\alpha^{\star}, \alpha_{\exact})$ over 1,000 independent trials for the problem~\cref{hidden_noisy} when the noise variance $\sigma^2$, the number of snapshots $N$, and the penalty parameter $\eta$ vary, where $d(\cdot, \cdot)$ is given in~\cref{d}, $\alpha_{\exact} = [1 + i, 1 - i, -0.2 + 3.7i, -0.2 - 3.7i]^{\rT}$ denotes the exact eigenvalue of the clean dynamics~\cref{hidden_clean}, and $\alpha^{\star}$ is a solution obtained by either the $\ell^2$-optimized DMD model~\cref{AK_varpro} or the proposed model~\cref{proposed_varpro} with the initial guess $\alpha^{(0)} = \alpha_{\exact}$.}
\label{Table:hidden_valid}
\end{table}

As for the case of the periodic problem, we conduct numerical experiments that can validate the suitability of the proposed model to the multiplicative noise. The average and sample standard deviation of the eigenvalue errors $d(\alpha^{\star}, \alpha_{\exact})$ over 1,000 independent trials for various $\sigma^2$, $N$, and $\eta$ are presented in \cref{Table:hidden_valid}, where $\alpha^{\star}$ denotes a solution obtained by either~\cref{AK_varpro} or~\cref{proposed_varpro} with the ideal initial guess $\alpha^{(0)} = \alpha_{\exact}$. We can again assure that the eigenvalue errors resulted from the proposed model are smaller than that of the $\ell^2$-optimized DMD model for all cases. Hence, the same discussion as in \cref{Table:periodic_valid} can be made for the hidden dynamics problem. In \cref{Table:hidden_valid}, an optimal range for the penalty parameter $\eta$ that results in the best performance of the proposed model is observed to be between $10^5$ and $10^6$.

\begin{figure}[]
\centering
\includegraphics[width=0.95\linewidth]{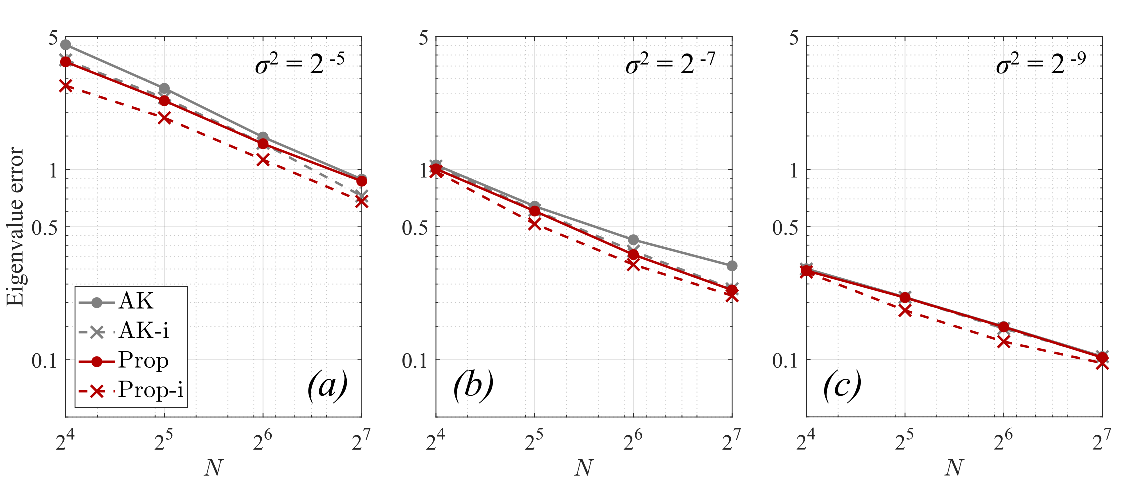}
\caption{The average of the eigenvalue error $d(\alpha^{\star}, \alpha_{\exact})$ over 1,000 independent trials for the problem~\cref{hidden_noisy} with respect to various numbers of snapshots $N$, where $d(\cdot, \cdot)$ is given in~\cref{d} and $\alpha_{\exact} = [1 + i, 1 - i, -0.2 + 3.7i, -0.2 - 3.7i]^{\rT}$. Here, $\alpha^{\star}$ denotes a solution obtained by either the $\ell^2$-optimized DMD model~\cref{AK_varpro}~(AK) or the proposed model~\cref{proposed_varpro}~(Prop) with $\eta = 10^5$. The expression ``-i'' means that an ideal initial guess $\alpha^{(0)} = \alpha_{\exact}$ is used.}
\label{Fig:hidden_error}
\end{figure}

\begin{figure}[]
\centering
\includegraphics[width=0.95\linewidth]{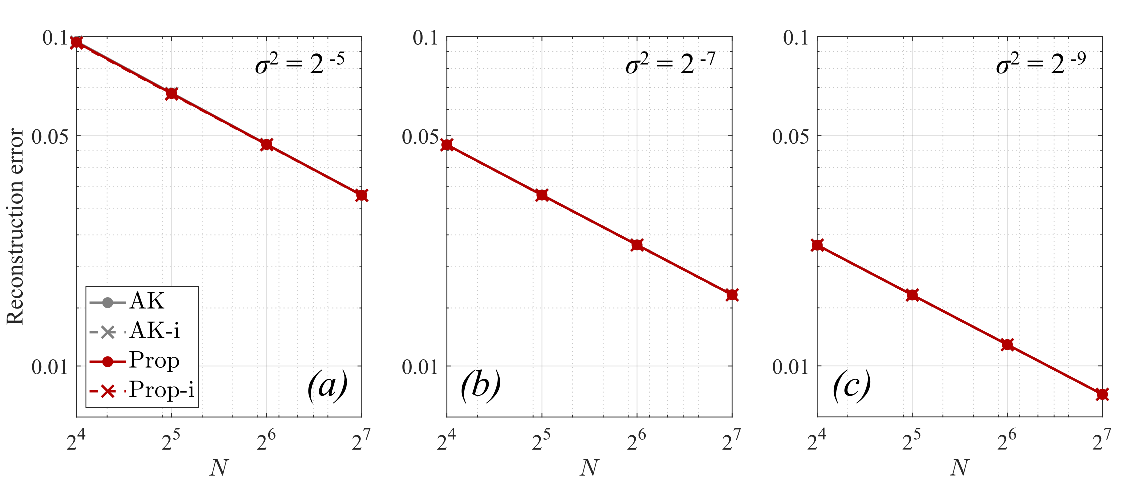}
\caption{The average of the reconstruction error $E_{\recon}$ over 1,000 independent trials for the problem~\cref{hidden_noisy} with respect to various numbers of snapshots $N$, where $E_{\recon}$ is given in~\cref{recon} and $\alpha_{\exact} = [1 + i, 1 - i, -0.2 + 3.7i, -0.2 - 3.7i]^{\rT}$. Here, $\alpha^{\star}$ denotes a solution obtained by either the $\ell^2$-optimized DMD model~\cref{AK_varpro}~(AK) or the proposed model~\cref{proposed_varpro}~(Prop) with $\eta = 10^5$. The expression ``-i'' means that an ideal initial guess $\alpha^{(0)} = \alpha_{\exact}$ is used.}
\label{Fig:hidden_recon}
\end{figure}

To evaluate the practical performance of the proposed model for the hidden dynamics problem, we compare four algorithms AK, AK-i, Prop, and Prop-i for solving~\cref{hidden_noisy}. In all experiments for \cref{Fig:hidden_error,Fig:hidden_recon}, we set $\eta = 10^5$ in~\cref{proposed_varpro}. \Cref{Fig:hidden_error} presents the average of the eigenvalue errors $d(\alpha^{\star}, \alpha_{\exact})$ over 1,000 trials for the problem~\cref{hidden_noisy} with respect to varying $N$. In all algorithms, the eigenvalue error decreases as the number of snapshots $N$ increases. Similar to the periodic problem, the eigenvalue errors of Prop-i are less than those of the others for all values of $\sigma^2$ and $N$. Hence, we can conclude that, under the assumption that a sufficiently good initial guess is provided, the proposed model performs much better than the $\ell^2$-optimized DMD model. Meanwhile, we notice from \cref{Fig:hidden_error} that the performance of Prop is similar to that of AK-i, and is a bit better than that of AK in general. This implies that, although the initialization scheme adopted by Prop is not the best one, it still performs as well as AK-i, an ideal case of the $\ell^2$-optimized DMD model. \Cref{Fig:hidden_recon} presents the average of the reconstruction errors $E_{\recon}$ over 1,000 trials for the problem~\cref{hidden_noisy}. Different from Figure~\ref{Fig:hidden_error}, the reconstruction errors of all algorithms are indistinguishable.
Since the eigenvalues $-0.2 \pm 3.7i$ corresponding to the decaying signals barely affect on the reconstruction error, \cref{Fig:hidden_error,Fig:hidden_recon} imply that the eigenvalues corresponding to the growing signals recovered by all algorithms are almost identical while the proposed model yields more accurate values for the eigenvalues corresponding to the decaying signals than the $\ell^2$-optimized DMD model. That is, the proposed model outperforms the state-of-the-art DMD models in view of recovery of hidden dynamics.

\subsection{One-dimensional combustor}
\label{Subsec:Combustor}
Now we demonstrate the proposed DMD model on a realistic physical system, a one-dimensional combustor. Despite its simple configuration, a one-dimensional combustor is ideal for studying the dynamics of a thermoacoustically oscillating system~\cite{guan2019,lee2020,orchini2016,waugh2011}, both experimentally and numerically. In this paper, we generate numerical data representing the pressure oscillation in a one-dimensional combustor under noise. Specifically, the noise that consists of both the additive and multiplicative noise is applied to the system, as per~\cite{waugh2011}.

\begin{figure}[]
\centering
\includegraphics[width=0.45\linewidth]{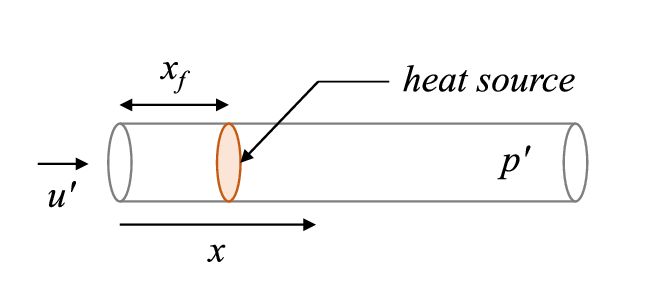} 
\caption{Schematic diagram of the one-dimensional combustor \cite{lee2021}.}
\label{Fig:setup}
\end{figure}

The numerical combustor considered in this system is identical to~\cite{lee2021,waugh2011}, so only a brief description is given in this paper. A schematic of a one-dimensional combustor is shown in \cref{Fig:setup}. In this system, air flows into an open-open cylinder with the velocity fluctuation $u^{\prime}$. A compact heat source is located at $x=x_f$, where $x$ is the distance from the left end of the cylinder. The fluctuation $q^{\prime}$ of the heat release rate from this heat source is given by the following equation~\cite{waugh2011}:
\begin{equation*} \label{eq:noise}
    q^{\prime}=k_Q \left(\sqrt{\left| \frac{1}{3}+u_f'(t-\tau)+d \right| }-\sqrt{\frac{1}{3}}\:\right),
\end{equation*}
where $k_Q$ is the heater power coefficient, $u_f'$ is the velocity fluctuation at the heat source, $\tau$ is the time delay between the flow and the heat release rate. The term $d$ denotes the pink noise acting on the heat source, profiles are shown in subfigures~(b) of \cref{Fig:combustor_recon1,Fig:combustor_recon2,Fig:combustor_recon3}. In this model, the noise perturbs the heat release rate in the mixed form of additive and multiplicative noise. It is worth mentioning that this simple noise model can effectively reproduce the qualitative features of the Ornstein--Uhlenbeck process~\cite{horsthemke1984, waugh2011}.

The momentum and energy equations governing the one-dimensional combustor are as follows:
\begin{align*} 
    \gamma \Ma \pdv{u'}{t} + \pdv{p'}{x} &= 0, \\
    \pdv{p'}{t} + \gamma \Ma \pdv{u'}{x} + \epsilon p' &= q'\delta(x-x_f),
\end{align*}
where $p^{\prime}$ is the pressure fluctuation inside the combustor, $t$ is time, $\gamma$ is the specific heat ratio, $\Ma$ is the Mach number of the mean flow, and $\epsilon$ is the acoustic damping coefficient. In the right-hand side of the second equation, $\delta$ denotes a Dirac delta expressing the local heat release at the heat source. 

By using the Galerkin expansion \cite{lores1973,zinn1971}, a set of ordinary differential equations can be derived from the momentum and energy equations. Specifically, we set $\pdv{u}{x}$=0 and $p'=0$ at both ends of the cylinder, and choose appropriate Galerkin basis functions so that
\begin{subequations}
\begin{align} \label{eq:gal1}
    u' &= \sum_{j=1}^{j_{\max}} \zeta_j \cos{(j \pi x)}, \\
 \label{eq:gal2}
    p' &= -\sum_{j=1}^{j_{\max}} \frac{\gamma \Ma}{j \pi} \dot{\zeta}_j \sin{(j \pi x)},
\end{align}
\end{subequations}
where $j_{\max}$ is the number of superpositioned Galerkin modes and $\zeta_j$ denotes the state variable of the $j$th mode. It should be noted that all Galerkin modes are orthogonal, but are not necessarily the eigenmodes of the system. By replacing $u^{\prime}$ and $p^{\prime}$ of the governing equations with equations \cref{eq:gal1,eq:gal2}, respectively, we obtain
\begin{equation} \label{eq:govfin}
    \ddot{\zeta}_j + (j \pi)^2 \zeta_j + \epsilon_j \dot{\zeta}_j = -k_Q \frac{2 j \pi}{\gamma \Ma} \sin{(j \pi x_f)} \left( \sqrt{\abs{\frac{1}{3}+u_f'(t-\tau)+d}} -\sqrt{\frac{1}{3}}\:\right),
\end{equation}
where $\epsilon_j=0.1 + 0.06\sqrt{j}$ represents the acoustic damping coefficient of the $j$th Galerkin mode~\cite{Gupta2017,lee2021}. \Cref{eq:govfin} is solved numerically using the fourth-order Runge--Kutta method with a time step $\Delta t=0.01$ for $t \in [0, 200]$. The combustor is spatially divided into 500 equal-length segments. We set the parameters of~\cref{eq:govfin} as follows: $\Ma=0.005$, $x_f = 0.25$, $\tau=0.16$, $k_Q=0.0035$, and $\gamma=1.4$, following previous practices \cite{lee_phd, lee2021}. By numerically solving this time-marching problem, we obtain a data matrix $X$ whose rows represent the spatial distribution of $p^{\prime}$ and columns represent the time.

\begin{figure}[]
\centering
\includegraphics[width=0.9\linewidth]{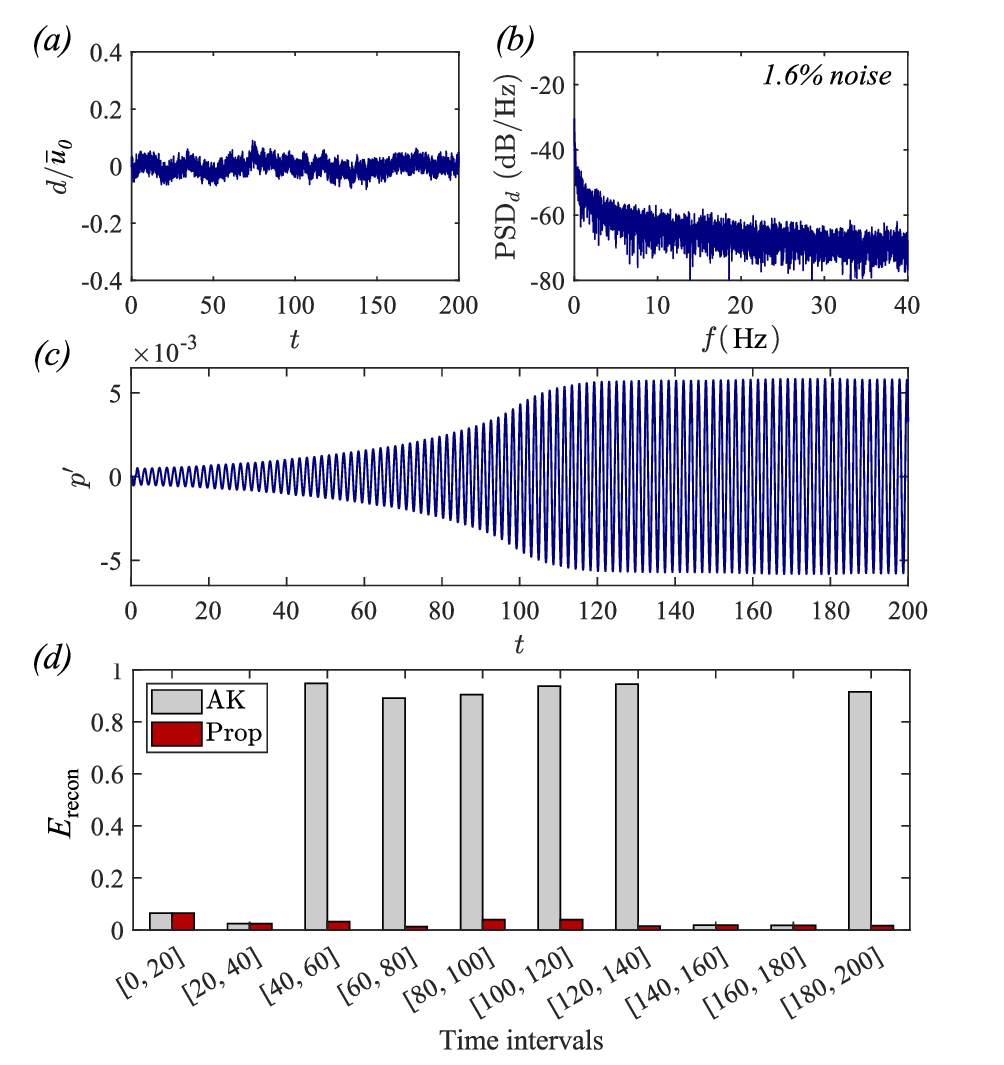} 
\caption{(a, b) Noise profile, (c) pressure signal and (d) reconstruction error for the one-dimensional combustor problem under weak noise.}
\label{Fig:combustor_recon1}
\end{figure}

\begin{figure}[]
\centering
\includegraphics[width=0.9\linewidth]{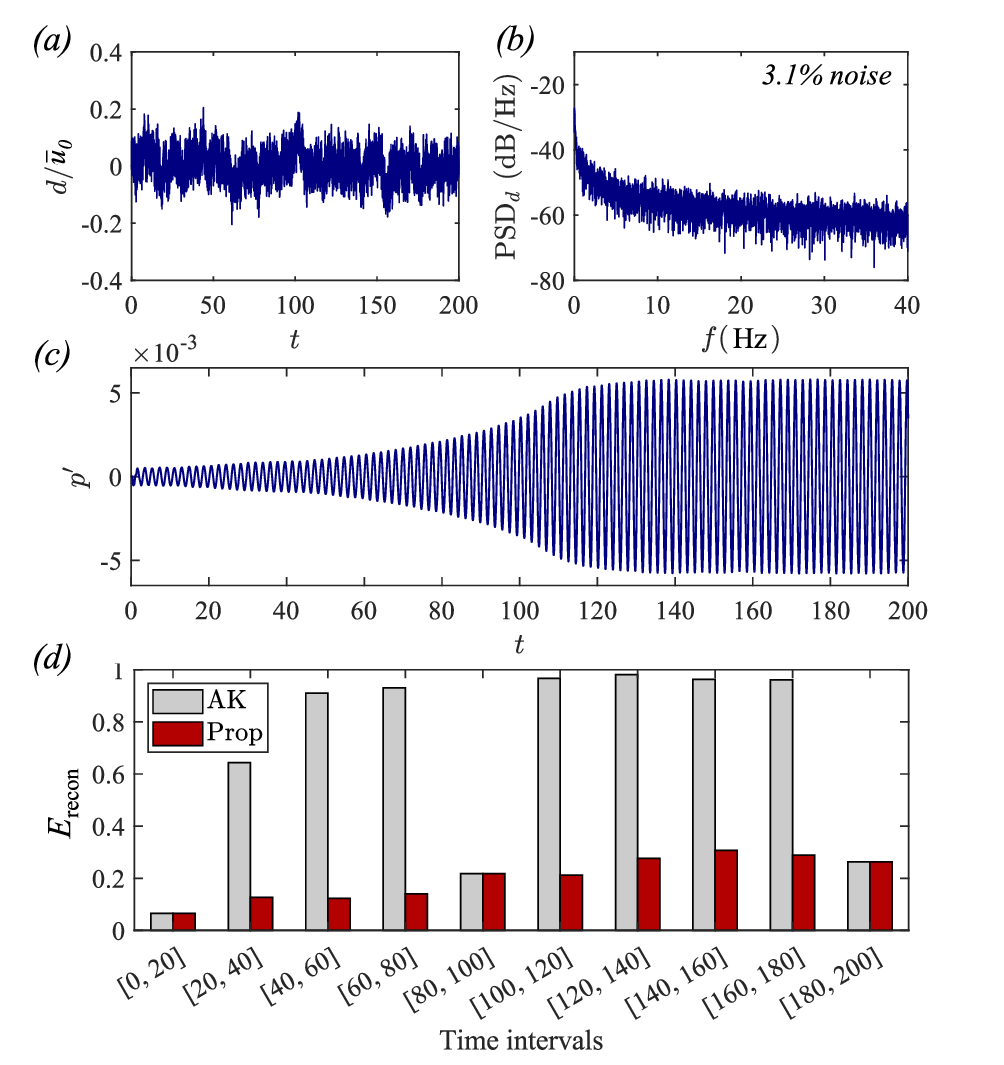} 
\caption{(a, b) Noise profile, (c) pressure signal and (d) reconstruction error for the one-dimensional combustor problem under intermediate noise.}
\label{Fig:combustor_recon2}
\end{figure}

\begin{figure}[]
\centering
\includegraphics[width=0.9\linewidth]{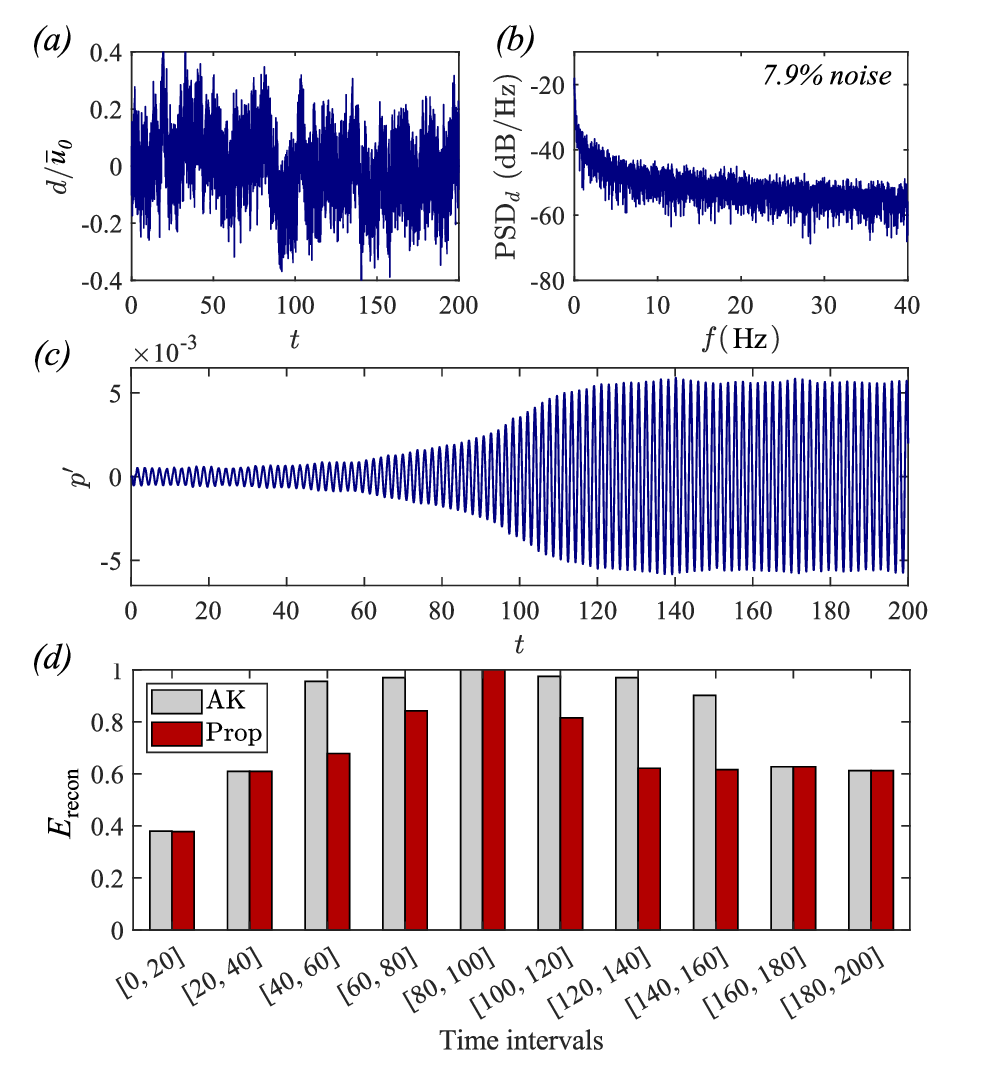} 
\caption{(a, b) Noise profile, (c) pressure signal and (d) reconstruction error for the one-dimensional combustor problem under strong noise.}
\label{Fig:combustor_recon3}
\end{figure}

We normalize the noise intensity with the average flow fluctuation amplitude~($\bar{d}/\bar{u}_0$) and consider three distinctive cases: weak~($\bar{d}/\bar{u}_0$=0.0016), intermediate~($\bar{d}/\bar{u}_0$=0.0031) and strong~($\bar{d}/\bar{u}_0$=0.0079) noise. The profiles of weak, intermediate and strong noise are shown in subfigures (a, b) of \cref{Fig:combustor_recon1,Fig:combustor_recon2,Fig:combustor_recon3}, respectively. 

The pressure fluctuation signals at the pressure antinode~($x=0.5$) are shown in subfigures (c) of \cref{Fig:combustor_recon1,Fig:combustor_recon2,Fig:combustor_recon3}. Regardless of the noise intensity, the pressure fluctuation develops gradually until $t\approx110$ when the nonlinearity starts to dominate the dynamics of the system. In order to capture the local linearity of the system, we divide the pressure signal into ten time sections $[0, 20]$, $[20, 40]$, \dots, $[180, 200]$, and apply the proposed DMD model with $\eta=10^3$ at each section. Specifically, we use $R=10$~(i.e., five pairs of eigenvectors) to decompose the noisy signal segments. We then reconstruct the pressure signal using the obtained modes and compare it with the clean (zero-noise) data.
The reconstruction error $E_{\recon}$ defined in~\cref{recon} is measured at each segment.

It can be found from \cref{Fig:combustor_recon1}(d) that the proposed DMD model can accurately decompose the signal with minimal reconstruction error. Specifically, $E_{\recon}$ is 6.5\% in the first segment and is less than 4\% in all other segments. A significant decrease in the reconstruction error is observed when compared to the $\ell^2$-optimized DMD model. Considering that the noise acting on the system has both additive and multiplicative natures, this result shows the robustness of the proposed DMD model to the multiplicative noise.

In the intermediate noise case~(see \cref{Fig:combustor_recon2}), the reconstruction error increases, showing $E_{\recon}$ values between $6.5\%$ and $31\%$. DMD results are comparatively accurate in the first half of the signal where linear growth is observed, but the inaccuracy increases in the second half, where the strong nonlinearity comes into play. Finally, when the noise intensity is further increased~(see \cref{Fig:combustor_recon3}), the reconstruction error becomes very high. This implies that the DMD method, which assumes linear temporal development, is difficult to be used for reconstruction anymore. Nevertheless, it is worth mentioning that the reconstruction error is smaller in the proposed model, compared to the $\ell^2$-optimized DMD model.

At this point, it is of interest to find the cause of such a decrease in reconstruction error. Because both the $\ell^2$-optimized DMD model and the proposed model heavily depend on the initial $\alpha$, one may hypothesize that an ideal initial guess may lead to equally good results in both models. In order to check such a claim, we test both models using the initial $\alpha$  values obtained from the clean data~(i.e., zero-noise combustor data). The result shown in \cref{Fig:combustor_recon4} reveals that an ideal initial value significantly reduces the reconstruction error of the $\ell^2$-optimized DMD model, while its effect on the proposed method is minimal~(see \cref{Fig:combustor_recon3} for comparison). This result indicates that the proposed method by itself can find a nice initial value. It is also notable that, at some points~(e.g., time intervals $[60, 80]$ and $[120, 140]$ of \cref{Fig:combustor_recon4}), the advantage of the proposed method over the existing method is substantiated regardless of the initial value.

\begin{figure}[]
\centering
\includegraphics[width=0.9\linewidth]{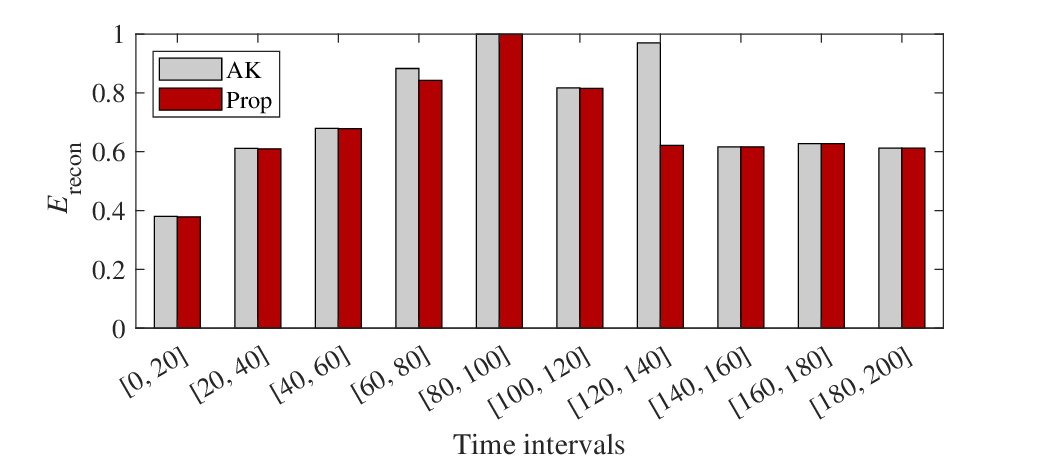} 
\caption{Reconstruction error for the one-dimensional combustor problem under strong ($\bar{d}/\bar{u}_0$=0.0079) noise, using initial $\alpha$ obtained from clean~(zero noise) data.}
\label{Fig:combustor_recon4}
\end{figure}

\begin{figure}[]
\centering
\includegraphics[width=0.7\linewidth]{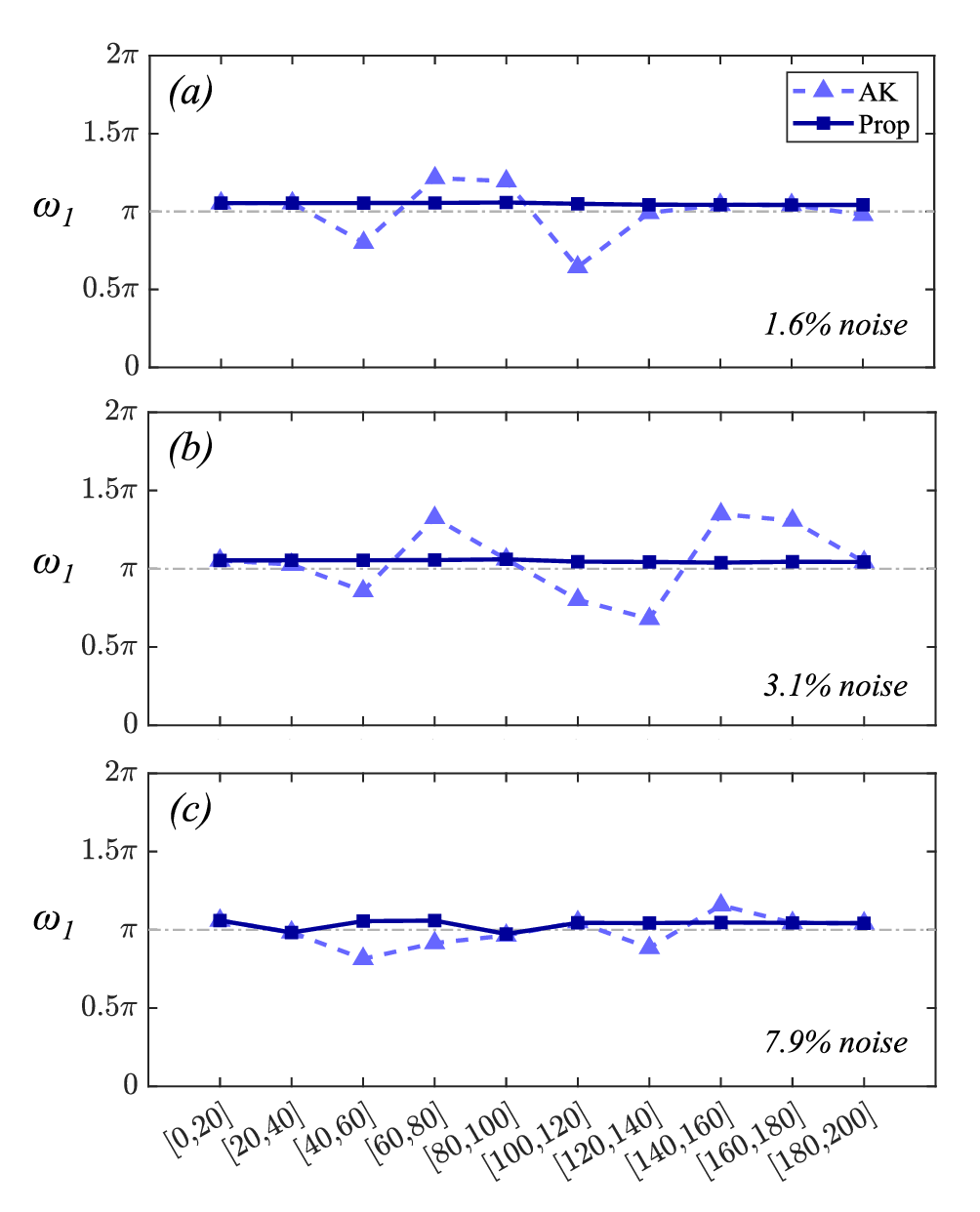} 
\caption{Primary angular frequencies of the combustor pressure oscillation identified from the $\ell^2$-optimized DMD model and the proposed model at (a) 1.6\% noise, (b) 3.1\% noise and (c) 7.9\% noise. Gray lines indicate the analytical primary angular frequency $\pi$. Horizontal axes are identical to the time intervals displayed in \cref{Fig:combustor_recon1,Fig:combustor_recon2,Fig:combustor_recon3}.}
\label{Fig:combustor_freq}
\end{figure}

In the one-dimensional combustor used in our numerical experiment, the mode of oscillation is determined by the duct mode of an open-open tube. Specifically, the angular frequency of the $j$th duct mode is equal to $j\pi$. It is known from the previous study that the primary mode of oscillation carries the majority of the energy~\cite{lee_phd}. We therefore assess whether the proposed model can properly identify the primary oscillation frequency. \cref{Fig:combustor_freq} shows the primary angular frequency~($\omega_1$) obtained from the $\ell^2$-optimized DMD and proposed models under different noise intensities. It is notable that, while both models can reasonably identify the primary mode at $\omega_1=\pi$, the proposed model better captures the analytical result even under a high level of noise. This result highlights the importance of the denoising model specific to multiplicative noise.

\section{Conclusion}
\label{Sec:Conclusion}
In this study, we proposed a novel optimized DMD model that is robust to multiplicative noise. Combining the ideas of the $\ell^2$-optimized DMD model~\cite{AK:2018} and the Aubert--Aujol denoising model~\cite{AA:2008}, we developed a framework that can accurately decompose a dynamical system under the multiplicative noise.
We applied the framework to three numerical examples, including a realistic physical system, and showed that the accuracy of the proposed DMD model had been improved compared to other DMD techniques. This study highlights that designing DMD models tailored for the type of noise can improve the accuracy of reconstruction.

This work suggests several interesting topics for future research. For instance, it is well-known that the performance of nonconvex variational models highly depends on the choice of an initial guess~\cite{YL:2020}. Indeed, numerical results in \cref{Sec:Numerical} showed that a better initial guess for the proposed model yields a better reconstruction result. It is therefore an important task to design a good initialization scheme for the proposed model. Realization of the optimal performance in the proposed model, which requires the design of an appropriate initialization scheme, remains as future work.
 
Nevertheless, it is encouraging that the proposed DMD model showed an outstanding reconstruction performance when applied to a realistic physical system, namely the one-dimensional combustor. This implies that the proposed optimized DMD model can contribute to the accurate decomposition of various practical systems in nature and engineering, especially those under the influence of multiplicative noise \cite{brand1985, fox1984, granwehr2007, short1982}.

\section*{Acknowledgement}
This work was inspired by the discussion with Professor Chae Hoon Sohn at Sejong University regarding the development of robust model reduction techniques for combustion dynamics. The authors would like to thank him for his insightful comments and assistance in the early stage of this work.

\bibliographystyle{siamplain}
\bibliography{refs_ODMD}

\end{document}